\documentclass[11pt]{amsart}
\usepackage{amsmath, amssymb, amsthm, color}
\usepackage{graphicx}
\usepackage{hyperref}

\newtheorem{theorem}{Theorem}

\newtheorem{lemma}{Lemma}

\newtheorem{definition}{Definition}
\newtheorem{claim}{Claim}

\title{Connectivities for $k$-knitted graphs and for minimal counterexamples to Hadwiger's Conjecture}

\author{Ken-ichi Kawarabayashi}
\address{National Institute of Informatics, 2-1-2 Hitotsubashi, Chiyoda-ku, Tokyo 101-8430, Japan.}
\email{k\_keniti@nii.ac.jp}
\thanks{Research partly supported by Japan Society for the Promotion of Science, Grant-in-Aid for Scientific Research, by C \& C Foundation and by Kayamori Foundation.}

\author{Gexin Yu}
\address{Department of Mathematics, College of William and Mary, Williamsburg, VA 23185.}
\email{gyu@wm.edu}
\thanks{Research supported in part by the NSA grant H98230-12-1-0226.}

\date{\today}

\begin{document}
\maketitle

\begin{abstract}
For a given subset $S\subseteq V(G)$ of a graph $G$, the pair $(G,S)$ is \emph{knitted} if for every partition of $S$ into non-empty subsets $S_1, S_2, \ldots, S_t$, there exist pairwise disjoint connected subgraphs $C_1, C_2, \ldots, C_t$ in $G$ such that $S_i\subseteq V(C_i)$ for all $1 \le i \le t$. A graph $G$ is \emph{$\ell$-knitted} if $(G,S)$ is knitted for every subset $S\subseteq V(G)$ of size $\ell$. In this paper, we prove that every $8\ell$-connected graph is $\ell$-knitted. We subsequently apply this result to Hadwiger's Conjecture, which states that every $k$-chromatic graph contains a $K_k$-minor. Specifically, we demonstrate that the vertex connectivity of any minimal counterexample to Hadwiger's Conjecture is at least $\lceil k/8 \rceil$, improving upon the previous lower bound of $\lceil 2k/27 \rceil$ established by Kawarabayashi (2007).  Our proof corrects a gap in the argument of Kawarabayashi-Yu~(2013) and establishes the claim stated without proof in Liu--Rolek--Yu~(2019).
\end{abstract}

\section{Introduction}

One of the most profound problems in graph theory is Hadwiger's Conjecture, which posits that \emph{every $k$-chromatic graph contains a $K_k$-minor}. A graph $H$ is a minor of a graph $G$ if $H$ can be obtained from a subgraph of $G$ by contracting edges.

It is well known that Hadwiger's Conjecture holds for $k\le 6$. In 1937, Wagner~\cite{W37} proved that the case $k=5$ is equivalent to the Four Color Theorem. Nearly 60 years later, Robertson, Seymour, and Thomas~\cite{RST93} proved that the case $k=6$ is also equivalent to the Four Color Theorem. In their proof, minimal counterexamples---often referred to as ``contraction-critical non-complete graphs''---play a foundational role. 

Kawarabayashi and Toft~\cite{KT05} showed that any $7$-chromatic graph contains either a $K_7$-minor or a $K_{4,4}$-minor, a result for which the connectivity properties of minimal counterexamples are once again pivotal. Extensive research has been dedicated to the connectivity properties of contraction-critical graphs. Dirac~\cite{D52a} proved that every $k$-contraction-critical graph is $5$-connected for $k\ge 5$. Mader~\cite{M68} extended this deep structural result, establishing that every $k$-contraction-critical graph is $7$-connected for $k\ge 7$, and every $6$-contraction-critical graph is $6$-connected. Toft~\cite{T72} proved that $k$-contraction-critical graphs are $k$-edge-connected. Kawarabayashi~\cite{K07} established the first general linear bound on the vertex connectivity of minimal counterexamples to Hadwiger's Conjecture:

\begin{theorem}[Kawarabayashi \cite{K07}]\label{K-thm}
For all positive integers $k$, every minimal (with respect to the minor relation) $k$-chromatic counterexample to Hadwiger's Conjecture is $\lceil \frac{2k}{27}\rceil$-connected.
\end{theorem}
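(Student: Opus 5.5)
The plan is to follow the standard Mader-type strategy: combine the structure of a minimum separation in a minimal counterexample with a linkage (knittedness) statement for highly connected graphs. Let $G$ be a minimal counterexample with $\chi(G)=k$. Then $G$ is $k$-contraction-critical, meaning every proper minor is $(k-1)$-colorable. Suppose for contradiction that $G$ has a separation $(A,B)$ with separator $S=A\cap B$ and $|S|=\ell<\lceil 2k/27\rceil$.

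First, I would collect the classical facts about contraction-critical graphs. One is that $G$ has minimum degree at least $k$. Another, due to Mader/Dirac, is that for any separator $S$ of a contraction-critical graph, $G[S]$ is not a clique. More strongly, for any partition of $S$ into parts, we may try to contract parts inside one side. Colour $G[A]$ with $S$-parts merged so that the colouring induces a prescribed partition on $S$; minimality gives $(k-1)$-colourings of $G[A]$ and $G[B]$ after contracting. The key observation is the following. Suppose that for \emph{every} partition $\mathcal P$ of $S$ arising from a $(k-1)$-colouring of $G[B]$, we can find in $G[A]\setminus$ (interior) disjoint connected subgraphs realizing $\mathcal P$, i.e. $(G[A],S)$ is knitted. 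Then contracting these subgraphs gives a proper minor of $G$ whose $(k-1)$-colouring restricts to $G[B]$ with exactly the partition $\mathcal P$ on $S$. Permuting colours then glues it with a colouring of $G[A]$, which yields a $(k-1)$-colouring of $G$, a contradiction. Symmetrically, it suffices that one side $A$ (minus $S$ edges) is ``knitted on $S$''.

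The second step is to produce enough connectivity inside $A$ to guarantee knittedness. Choose the side $A\setminus S$ minimal (an atom of a minimum separation). Each vertex of $A\setminus S$ has degree at least $k$ in $G$, and $A\setminus S$ has no small separation relative to $S$. I would then use a density argument rather than a connectivity argument. By Mader's theorem, a graph of average degree at least $c\,t\sqrt{\log t}$, or linear variants like Thomason's, contains a highly connected subgraph. Concretely, Mader showed that every graph of average degree $\ge 4t$ has a $t$-connected subgraph. Since $\delta\ge k$ gives average degree $\ge k$ within $A$ (up to losing $|S|$), we get a $(k/4-\ell)$-connected subgraph $H$ in $G[A\setminus S]$. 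Then $\ell$ disjoint paths from $S$ to $H$ exist by Menger, because $S$ is a minimum separator and $G$ is $\ell$-connected. Linking $S$ into $H$ reduces knittedness of $(G[A],S)$ to $H$ being $\ell$-knitted (in fact one needs knittedness for the $\ell$ endpoints in $H$). Using the bound that every $c\ell$-connected graph is $\ell$-linked/knitted (Bollob\'as–Thomason give $22\ell$, improved to $10\ell$ by Thomas–Wollan), we need roughly $k/4-\ell\ge 10\ell$ with careful bookkeeping. The constant $27/2$ emerges from optimizing these losses; for instance, Thomas–Wollan's average-degree version asserts that $2\ell$-connected graphs with average degree $\ge 10\ell$ are $\ell$-linked, applied inside $A$ with degrees $\ge k-\ell$.

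The main obstacle is the routing step. The $\ell$ paths from $S$ to $H$ must not interfere with the connected subgraphs built in $H$, and knittedness (arbitrary partitions) is weaker to derive from linkage results than plain linkage. I would handle knittedness by reducing each part $S_i$ to a tree via pairing $S_i$'s vertices into a linkage problem of size at most $\ell$. I would also apply the Thomas–Wollan lemma directly to $G[A]$ with $S$ as the terminal set, using that $G[A]$ plus a clique on $S$ is $\ell$-connected to the terminals and that every vertex off $S$ has degree $\ge k\ge 27\ell/2$. Checking that the density hypothesis of that lemma survives deleting $S$-adjacent edges is where the constant $2/27$ would be verified.
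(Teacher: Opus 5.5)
There are two genuine gaps.

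\textbf{The colouring step.} This is the heart of the reduction, and as written it fails. Suppose you contract disjoint connected sets $C_1,\dots,C_t\subseteq G[A]$ realizing a partition $P$ of $S$. A $(k-1)$-colouring of the resulting minor pulls back to a colouring of $G[B]$ in which each part of $P$ is monochromatic. Two parts are forced to receive different colours only if $C_i$ and $C_j$ are adjacent, and knittedness says nothing about that. So the induced partition of $S$ can be strictly coarser than $P$.

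There is a second problem in the same step. Contracting inside $A$ only produces colourings of the $B$-side. To glue, you need a colouring of $G[A]$ inducing the same partition, and your argument never produces one; starting from a colouring of $G[B]$ makes the step circular. Hence ``it suffices that one side is knitted'' is not justified. It would be justified if $A$ contained, for every partition, \emph{pairwise adjacent} branch sets, but that is stronger than knittedness.

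The missing idea is in the paper's proof of the stronger $\lceil k/8\rceil$ bound, which implies this statement since $k/8\ge 2k/27$. Fix $P$ in advance as a greedy partition of $S$ into maximal independent sets $S_1,\dots,S_t$. Then:
\begin{itemize}
\item for $i>j$ every vertex of $S_i$ has a neighbour in $S_j$, so the contracted branch sets always span a $K_t$;
\item every $(k-1)$-colouring of the minor therefore induces exactly $P$ on $S$;
\item independence of the parts makes the pull-back colouring proper.
\end{itemize}
Because $P$ is no longer derived from a colouring, you then need knittedness of \emph{both} $(G[A],S)$ and $(G[B],S)$: contracting in $A$ colours $G[B]$, and contracting in $B$ colours $G[A]$. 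Both sides do satisfy the needed hypotheses, since $S$ is a minimum separator: no separation of order $<\ell$ relative to $S$, and degree $\ge k-1$ off $S$. You discarded this by passing to a single atom side.

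\textbf{The quantitative part.} Your argument does not reach $2k/27$. On your concrete route:
\begin{itemize}
\item $G[A\setminus S]$ has average degree at least $k-\ell$;
\item Mader's theorem then gives a $\frac{k-\ell}{4}$-connected subgraph $H$;
\item the $10\ell$-connectivity bound for linkage needs $(k-\ell)/4\ge 10\ell$, i.e.\ $\ell\le k/41$.
\end{itemize}
That is much weaker than $\ell<2k/27$.

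Moreover, those theorems give linkage, not knittedness. Linkage needs $2\ell$ distinct terminals, and connecting a part with $s_i\ge 3$ by ``pairing its vertices'' reuses terminals. So a further reduction is required, costing more connectivity.

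Your fallback, applying a rooted Thomas--Wollan-type lemma to $G[A]$ with terminal set $S$, is the right kind of statement. It is exactly the role of Theorem~\ref{knitted-theorem} in the paper. There it is proved via a minimal massed-pair argument, a dense-subgraph lemma based on the Liu--Rolek--Yu degree condition, and Menger routing from $S$ into the knitted subgraph. But you leave both the knitted (rather than linked) conclusion and the constant unchecked, and that is where the substance of the theorem lies.
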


The primary tool used in the proof of Theorem~\ref{K-thm} was the concept of $k$-linked graphs. A graph $G$ is \emph{$k$-linked} if for every $2k$ distinct vertices $u_1, v_1, u_2, v_2, \ldots, u_k,v_k$, there exist $k$ disjoint paths $P_1, P_2, \ldots, P_k$ such that $P_i$ connects $u_i$ and $v_i$. The study of $k$-linked graphs has a rich history and remains central to structural graph theory.

In this paper, we improve the bound in Theorem~\ref{K-thm} by investigating a related notion called a ``knitted graph,'' introduced by Bollob\'as and Thomason~\cite{BT96}. For $1\le m\le k\le |V(G)|$, a graph is \emph{$(k,m)$-knit} if, whenever $S$ is a set of $k$ vertices of $G$ and $S_1, \ldots, S_t$ is a partition of $S$ into $t\ge m$ non-empty parts, $G$ contains vertex-disjoint connected subgraphs $C_1, \ldots, C_t$ such that $S_i\subseteq V(C_i)$ for $1\le i\le t$. Clearly, a $(2k, k)$-knit graph is $k$-linked. In \cite{BT96}, Bollob\'as and Thomason proved that if a $k$-connected graph $G$ contains a minor $H$ with minimum degree at least $0.5(|V(H)|+\lfloor 5k/2\rfloor-2-m)$, then $G$ is $(k, m)$-knit. They used this to show that $22k$-connected graphs are $k$-linked, providing the first linear upper bound on the connectivity required for a graph to be $k$-linked.

We consider a natural generalization of the $(k,m)$-knit property. 

\begin{definition}
For a subset $S\subseteq V(G)$, the pair \emph{$(G, S)$ is knitted} if for every partition of $S$ into non-empty subsets $S_1, S_2, \ldots, S_t$, there exist pairwise disjoint connected subgraphs $C_1, C_2, \ldots, C_t$ in $G$ such that $S_i\subseteq V(C_i)$. A graph $G$ is \emph{$\ell$-knitted} if $(G,S)$ is knitted for all $S\subseteq V(G)$ with $|S|=\ell$.
\end{definition}

It is clear that an $\ell$-knitted graph is $(\ell, m)$-knit for all $m\le \ell$. In this paper, we establish a connectivity threshold for a graph to be $\ell$-knitted.

\begin{definition}\label{separation}
A pair $(A, B)$ of vertex sets is a \emph{separation} of $G$ if $V(G)=A\cup B$ and there are no edges between $A\setminus B$ and $B\setminus A$. The \emph{order} of a separation $(A, B)$ is $|A\cap B|$. If $S\subseteq A$, we say that $(A,B)$ is a separation of $(G,S)$. 
\end{definition}

Our main structural result is the following:

\begin{theorem}\label{knitted-theorem}
Let $k$ and $\ell$ be positive integers, and let $S\subseteq V(G)$ with $|S|=\ell<k/8$. If there is no separation of $(G,S)$ of order less than $\ell$, and every vertex in $G-S$ has degree at least $k-1$, then $(G,S)$ is knitted.
\end{theorem}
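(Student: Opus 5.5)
We argue by contradiction via a minimal counterexample, in the style of Thomas--Wollan's proof that $10k$-connected graphs are $k$-linked. Among all counterexamples $(G,S)$ with $|S|=\ell<k/8$, we choose one with $|V(G)|$ minimal and, subject to that, $|E(G)|$ maximal (adding edges inside $S$ can only help knittedness, so we may assume $S$ is a clique, or handle that separately). Minimality gives structural reductions. First, $G$ has no separation $(A,B)$ of $(G,S)$ of order exactly $\ell$ with $B\setminus A\neq\emptyset$ and $|B|$ large: otherwise, by the no-small-separation hypothesis there are $\ell$ disjoint paths from $S$ to $A\cap B$ (Menger), and we replace $(G,S)$ by $(G[A]\cup K(A\cap B),S)$. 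Here $K(A\cap B)$ denotes a clique on $A\cap B$, which we may simulate through $G[B]$ as long as $G[B]$ can realize the required connections. Second, contracting an edge $uv$ with $u\notin S$ produces a smaller graph, which must fail one of the hypotheses. Either a separation of order $<\ell$ appears, which yields a separation of order $\le\ell$ in $G$ containing $u,v$ in the cut, or some vertex of $G-S$ drops below degree $k-1$. The latter forces $u,v$ to have at least $|N(u)|+|N(v)|-k$ common neighbours, roughly $k-2$ common neighbours when degrees are near $k$.

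Next we apply this local density. Every edge $uv$ with an endpoint outside $S$ either lies in a separation of order $\ell$ or lies in at least $k-2$ triangles. Using the first reduction, we argue that separations of order $\ell$ essentially do not exist except trivial ones (e.g.\ $A=S\cup\{v\}$). Hence $G-S$ is locally very dense: a vertex $v$ of minimum degree $\delta\ge k-1$ has a neighbourhood in which each vertex has at least $k-2$ neighbours. We then pass to a dense minor. From $G[N(v)\cup\{v\}]$, or by repeatedly contracting edges while keeping minimum degree $\ge k-1$, we extract a subgraph $H$ disjoint from $S$, with $|V(H)|\le 2k$ or so, and with minimum degree at least about $|V(H)|/2+3\ell$.

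The linking step comes next. By the connectivity hypothesis (Menger, no separation of order $<\ell$), there are $\ell$ vertex-disjoint paths $P_1,\dots,P_\ell$ from $S$ to $V(H)$. We choose them to minimize their total length outside $H$, so that they use few vertices of $H$, and to reroute around $H$. Given a partition $S_1,\dots,S_t$, each part $S_i$ must be joined inside $H$ via the endpoints of its paths. In a graph on $n$ vertices with minimum degree $\ge n/2+c\ell$, any two non-adjacent vertices have at least $2c\ell$ common neighbours, so we can greedily connect the endpoints within each class. We use at most one new vertex per required connection, at most $\ell$ in total, and avoid the $\le\ell$ other path endpoints. This gives the disjoint connected subgraphs $C_i$, a contradiction.

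The main obstacle is the density step. The difficulty is guaranteeing that the dense piece $H$ has minimum degree large relative to its order by a margin linear in $\ell$ (here $k-1\ge 8\ell$ must translate into $|V(H)|/2+\Omega(\ell)$). At the same time, the $\ell$ paths must not destroy too much of $H$, since they may pass through $H$ and consume vertices. First, I would try a Mader/Thomas--Wollan-type lemma: a minimum-counterexample graph with $|E|$ maximal and every non-$S$ edge in $\ge k-2$ triangles contains such an $H$ of order at most $2k$. I would also budget the constant $8$ as $2\ell$ lost to paths, $2\ell$ for the common-neighbour margin, and the rest for the density argument. I expect the accounting of path intersections with $H$ and the separation-of-order-$\ell$ case to be where the argument is most delicate.
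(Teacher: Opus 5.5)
Your outline has the same architecture as the paper's: a Thomas--Wollan style minimal counterexample, local density from edge contractions, a small dense subgraph $H=G[N[v]]$, then Menger to route $S$ into $H$. However, the two steps that carry the weight do not go through as you set them up, and you leave the first one open yourself.

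\textbf{Contraction under a minimum-degree hypothesis.} The minimum-degree hypothesis is not stable under contraction. If you contract $uv$ with $u,v\notin S$, the smaller graph can fail the hypothesis because some common neighbour $z\notin S$ of $u$ and $v$ had degree exactly $k-1$ and drops to $k-2$. In that case you learn nothing about $|N(u)\cap N(v)|$. If $v\in S$, the merged vertex lies in $S$, and this is the only way the degree condition can fail. So the claim that every edge lies in about $k-2$ triangles is unsupported.

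\textbf{No vertex of small degree.} Nothing in your setup produces a vertex of $G-S$ of degree $O(\ell)$; maximizing $|E(G)|$ works against this. Without such a vertex, $G[N[v]]$ can have far more than $2k$ vertices while its internal degrees are only about $k$.

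\textbf{The missing idea.} Replace the degree hypothesis by an edge-density one. Since $k-1\ge 8\ell$, we have $\rho(V(G)-S)\ge 4\ell|V(G)-S|$, so with no separation of order $<\ell$ the pair $(G,S)$ is $4\ell$-massed (Definition~\ref{mass}). Take a counterexample minimizing $|V(G)|$ and then $\rho(V(G)-S)$.
\begin{itemize}
\item Contracting $uv$ with $v\notin S$ lowers the required mass by at least $4\ell$. This forces at least $4\ell$ common neighbours, less $|N(v)\cap S-N[u]|$ if $u\in S$.
\item Deleting an edge inside $V(G)-S$ must violate condition (i). This gives $\rho(V(G)-S)\le 4\ell|V(G)-S|$, hence a vertex $v\notin S$ of degree less than $8\ell$.
\item Separations are handled through rigid separations (Definition~\ref{rigid-separation}) and condition (ii), not by a clique gadget. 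Your replacement $G[A]\cup K(A\cap B)$ can push the degree of vertices of $(A\cap B)\setminus S$ below $k-1$, so it is not a smaller instance.
\end{itemize}

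\textbf{The linking step.} Even granting this $H$, your greedy common-neighbour linking needs minimum degree $|V(H)|/2+\Omega(\ell)$, and that margin is not available. The density step gives $|V(H)|\le 8\ell$ but internal degree only $\ge 4\ell+1$ (and $4\ell+1-s_i$ on $S_i$). That is essentially $|V(H)|/2$ when $d(v)$ is close to $8\ell$.

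The paper closes this with a separate argument (Theorem~\ref{subgraph}):
\begin{itemize}
\item Take an optimal partial knit $C$ with $|V(C)|\le\min_{z\in S}d_H(z)$.
\item Use the forest reduction of Theorem~\ref{thm:unified} to get $d(u,C)\le \ell+c$ for $u\notin V(C)$.
\item Show $H-C$ is connected and that $|V(C)|\ge \min_{z\in S}d_H(z)-4$.
\item Apply the Liu--Rolek--Yu criterion $\delta\ge (n+\ell)/2-1$ to $H-C$, which yields a knitted subgraph $K$.
\end{itemize}
Menger then either links $S$ into $K$ or yields a rigid separation of order $<\ell$, which Theorem~\ref{mass-min} excludes. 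Your budget for the constant $8$ ($2\ell$ for paths, $2\ell$ for margin) is therefore not the relevant accounting. In the paper, $8=2\alpha$ with $\alpha=4$ is exactly what the final inequality $0>(\alpha-4)\ell$ in that argument requires.
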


Note that in~\cite{KY13}, we claimed a proof of the above result for $|S|=\ell<k/9$. However, we later discovered a gap in the argument, arising from an overly strong conclusion asserted in Theorem~\ref{mass-min}. Subsequently, by replacing the sufficient degree condition used in~\cite{KY13} with the stronger condition established in~\cite{LRY19}, we claimed, without proof, that the above result also holds for $|S|=\ell<k/8$ in ~\cite{LRY19}.

Theorem~\ref{dense-graphs} on edge-density (presented in Section~\ref{thm-4}) is logically stronger than Theorem~\ref{knitted-theorem} and will serve as our primary tool. We are now ready to state and prove our main result on the connectivity of minimal counterexamples to Hadwiger's Conjecture.

\begin{theorem}
For all positive integers $k$, every $k$-chromatic minimal (with respect to the minor relation) counterexample to Hadwiger's Conjecture is $\lceil \frac{k}{8}\rceil$-connected.
\end{theorem}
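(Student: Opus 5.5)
We follow the scheme of Kawarabayashi's proof of Theorem~\ref{K-thm}, replacing $k$-linkage by the knitted property of Theorem~\ref{knitted-theorem}. Let $G$ be a $k$-chromatic minimal counterexample. By minimality, every proper minor of $G$ is $(k-1)$-colourable; in particular $G$ is $k$-contraction-critical. It is standard (Dirac) that then every vertex has degree at least $k-1$: otherwise $G-v$ is $(k-1)$-colourable and $v$ can be coloured. Suppose, for a contradiction, that $G$ has a separation $(A,B)$ with $A\setminus B\neq\emptyset\neq B\setminus A$ and $S=A\cap B$ of order $\ell<\lceil k/8\rceil$, so $\ell<k/8$. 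We choose such a separation with $\ell$ minimum; then $G$ has connectivity exactly $\ell$. We aim to show that $G[A]$ and $G[B]$ each admit a $(k-1)$-colouring, and that these can be chosen to agree on $S$. Gluing them then gives a $(k-1)$-colouring of $G$, a contradiction.

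\textbf{Step 1: knittedness on each side.} Let $G_B=G[B]$. We claim $(G_B,S)$ satisfies the hypotheses of Theorem~\ref{knitted-theorem}. Every vertex of $B\setminus S$ has all its neighbours in $B$, so it has degree at least $k-1$ in $G_B$. Now suppose $(G_B,S)$ had a separation $(X,Y)$ of order less than $\ell$ with $S\subseteq X$ and $Y\setminus X\neq\emptyset$. Then $(A\cup X,Y)$ would be a separation of $G$ of order less than $\ell$, contradicting the minimality of $\ell$. Hence Theorem~\ref{knitted-theorem}, with $\ell<k/8$, shows that $(G[B],S)$ is knitted. Symmetrically, $(G[A],S)$ is knitted.

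\textbf{Step 2: transfer a colouring pattern.} Since $A\setminus B\neq\emptyset$, the graph $G[A]$ is a proper subgraph, hence a proper minor, of $G$, so it has a $(k-1)$-colouring $c$. Let $S_1,\dots,S_t$ be the colour classes of $c$ restricted to $S$. Apply knittedness of $(G[B],S)$ to this partition to get disjoint connected subgraphs $C_1,\dots,C_t$ of $G[B]$ with $S_i\subseteq V(C_i)$.

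Now form $G'$ from $G$ by deleting all of $B\setminus S$ except the vertices of $\bigcup C_i$, and contracting each $C_i$ onto a single vertex $s_i$. This identifies the vertices of $S_i$. One should check that $G'$ is a proper minor: it is, since $B\setminus A\neq\emptyset$ and either some vertex is deleted or some edge is contracted (if each $C_i$ is a single vertex, then $V(\bigcup C_i)=S$ and $B\setminus S$ is deleted entirely). Hence $G'$ is $(k-1)$-colourable. Pull back such a colouring to $A$ by giving each vertex of $S_i$ the colour of $s_i$. This is a proper colouring of $G[A]$, because edges inside $S_i$ cannot occur: $S_i$ is an independent set, being a colour class of $c$. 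Under this colouring, vertices in the same $S_i$ receive the same colour, and vertices in distinct $S_i$ may also coincide in colour.

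\textbf{Step 3 and the main obstacle: matching the partitions.} The difficulty is that gluing requires the colourings of $A$ and of $B$ to induce the \emph{same} partition of $S$. Kawarabayashi's argument handles this as follows. First add to $G[A]$ and $G[B]$ a clique on $S$ where possible, or equivalently choose partitions cleverly. Then iterate: starting from the partition $\mathcal P$ induced on $S$ by a colouring of $B$, use knittedness of $(G[A],S)$ to realise $\mathcal P$ as contracted connected sets in $A$. This yields a $(k-1)$-colouring of the minor $G[B]\cup(\text{contracted }A)$, whose induced partition on $S$ is coarser than or equal to $\mathcal P$. Repeat the same on the other side.

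Each round produces a colouring of one side whose partition of $S$ is a coarsening of the partition obtained in the previous round. Since $S$ is finite, the process stabilises. At stability we have colourings of $A$ and of $B$ inducing the same partition of $S$, each realised via the contracted minor with distinct colours on the distinct merged vertices. After permuting colours, the two colourings agree on $S$ and combine into a $(k-1)$-colouring of $G$. This contradiction shows $G$ is $\lceil k/8\rceil$-connected. The delicate point I expect is making the coarsening monotone, i.e.\ ensuring that the colouring from the contracted minor induces exactly a partition into which both sides can be glued. This works precisely because knittedness is available for \emph{every} partition of $S$, which is where $(G,S)$ knitted is used rather than mere linkage.
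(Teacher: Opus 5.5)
Your proof is correct, but Step 3 reaches the gluing by a different mechanism from the paper's. The paper uses the ``choose partitions cleverly'' option that you mention and then drop. It takes $S_1$ maximal independent in $G[S]$, $S_2$ maximal independent in $G[S\setminus S_1]$, and so on, so that every vertex of $S_i$ has a neighbour in each $S_j$ with $j<i$. Knitting this one partition on each side and contracting then yields a $K_t$ on the contracted vertices. Hence every $(k-1)$-colouring of either contracted minor gives distinct colours to distinct $S_i$, both sides induce exactly $\{S_1,\dots,S_t\}$ on $S$, and they glue after a single round.

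Your route is a monotone fixed-point argument instead:
\begin{itemize}
\item start from a colouring of one side inducing $\mathcal P_j$ (whose parts are independent);
\item knit $\mathcal P_j$ inside that same side, contract, and colour the resulting minor;
\item this gives a colouring of the opposite side inducing some $\mathcal P_{j+1}$ coarser than or equal to $\mathcal P_j$.
\end{itemize}
Each strict coarsening lowers the number of parts, so within $|S|$ rounds some $\mathcal P_j=\mathcal P_{j+1}$. That pairs a colouring of $A$ with a colouring of $B$ inducing the same partition, and these glue after permuting colours.

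The paper's version is one-shot and needs only a single partition knitted on each side. Yours avoids the clique construction and works from an arbitrary starting colouring. The cost is that you need knittedness for a sequence of partitions not known in advance, plus a termination count.

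Some points to fix when writing it up:
\begin{itemize}
\item The step you call delicate is automatic. Each part is contracted to a single vertex, so it is monochromatic, and independence of the parts is exactly what makes the pullback proper.
\item Your Step~2 and the first round of Step~3 knit on the side opposite to the current colouring. They therefore only recolour the same side, which is harmless but redundant.
\item The essential feature is that consecutive rounds colour \emph{different} sides; say this explicitly.
\item Delete the remark about adding a clique on $S$, since adding edges is not a minor operation.
\end{itemize}
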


\begin{proof}
Assume, for the sake of contradiction, that the statement fails. Let $G$ be a minimal $k$-chromatic graph that contains no $K_k$-minor and has vertex connectivity strictly less than $k/8$. Let $S$ be a minimum vertex cutset of $G$. Thus, $|S|=\ell < k/8$. Let $A_1$ be the vertex set of a connected component of $G-S$, and let $A_2 = V(G) \setminus (S \cup A_1)$. 

By the minimality of $G$, both induced subgraphs $G[A_1\cup S]$ and $G[A_2\cup S]$ are proper subgraphs, and therefore have chromatic number at most $k-1$. 

We partition $S$ into independent sets greedily: let $S_1$ be a maximal independent set in $G[S]$, and for $i\ge 2$, let $S_i$ be a maximal independent set in the induced subgraph $G[S\setminus \bigcup_{j=1}^{i-1} S_j]$. Let $S_1, S_2, \ldots, S_t$ be the resulting partition. 

Observe the structural implication of this greedy maximal choice: for any $i > j$, every vertex in $S_i$ must have at least one neighbor in $S_j$, because adding any vertex of $S_i$ to $S_j$ would violate the independence of $S_j$. Consequently, if we find pairwise disjoint connected subgraphs $C_1, \ldots, C_t$ in $G$ such that $S_i \subseteq V(C_i)$, contracting each $C_i$ into a single vertex will form a clique $K_t$ on these contracted vertices.

Since $G$ is a minimal $k$-chromatic graph, its minimum degree is at least $k-1$. Thus, every vertex in $A_p$ (for $p \in \{1,2\}$) has at least $k-1$ neighbors in $A_p\cup S$. Note also that any separation of $(A_1\cup S, S)$ or $(A_2\cup S, S)$ is a valid separation of $(G,S)$. Since $S$ is a minimum cutset of $G$, neither $(A_1\cup S, S)$ nor $(A_2\cup S,S)$ can have a separation of order less than $\ell$.

Therefore, the pairs $(A_1\cup S, S)$ and $(A_2\cup S, S)$ both satisfy the conditions of Theorem~\ref{knitted-theorem}. By Theorem~\ref{knitted-theorem}, both pairs are knitted. 

This guarantees the existence of disjoint connected subgraphs $C_i \subseteq A_1\cup S$ such that $S_i\subseteq V(C_i)$, and similarly, disjoint connected subgraphs $D_i \subseteq A_2\cup S$ such that $S_i\subseteq V(D_i)$. We contract the components $C_i$ within $A_1\cup S$ into single vertices; by our earlier observation, these contracted vertices form a complete graph on the sets $S_1, S_2, \dots, S_t$. Let $G_1$ be the graph obtained from this contraction combined with the uncontracted vertices of $A_2$. Similarly, we can contract the components $D_i$ within $A_2\cup S$ to form a complete graph on the same partition sets, yielding a graph $G_2$ combined with $A_1$.

Both $G_1$ and $G_2$ are proper minors of $G$. By the minimality of $G$, we have $\chi(G_1) \le k-1$ and $\chi(G_2) \le k-1$. Crucially, the contracted vertices corresponding to $S_1, \dots, S_t$ form a clique in both $G_1$ and $G_2$. Therefore, any $(k-1)$-coloring of $G_1$ and $G_2$ must assign all $t$ contracted vertices strictly distinct colors. Since the cliques in $G_1$ and $G_2$ represent the exact same partition of $S$, we can naturally permute the colors of $G_2$ so that they perfectly match the colors assigned to the clique in $G_1$. Fusing these colorings yields a valid proper $(k-1)$-coloring for the entire graph $G$, contradicting the assumption that $G$ is $k$-chromatic. This completes the proof.
\end{proof}

The remainder of the paper is dedicated to proving Theorem~\ref{knitted-theorem}. We achieve this in two steps: first (Section~\ref{thm-4}), we establish that the graphs under study are either knitted or contain a highly dense subgraph; second (Section~\ref{thm-2}), we demonstrate that such dense subgraphs inherently contain a knitted subgraph. This approach structurally mirrors the techniques utilized by Thomas and Wollan~\cite{TW05}.

\section{A theorem on forests with a boundary set}

In this section, we establish the following results concerning the interaction between a forest with a prescribed boundary set and a vertex outside the forest. Although the result is developed for use in our proof, it may also be of independent interest.

\begin{theorem}[General Reduction]\label{thm:unified}
Let $W$ be a graph, and let $S' \subseteq V(W)$ be a boundary set with $|S'| \ge 2$. Let $F$ be a forest in $W$ with $c \ge 1$ connected components such that $S' \subseteq V(F)$ and all leaves of $F$ belong to $S'$. Let $u \in V(W) \setminus V(F)$. Suppose that $d(u, F) \ge |S'| + k$, where $k = 2$ if $c = 1$, and $k = 1$ if $c \ge 2$. Then the induced subgraph $W[V(F) \cup \{u\}]$ contains a forest $F_0$ with $u \in V(F_0)$ such that $S' \subseteq V(F_0)$, all leaves of $F_0$ in $S'$, and at least one of the following holds:
\begin{enumerate}
    \item $|V(F_0)| < |V(F)|$ \quad \text{(Strict Order Reduction)}
    \item $|V(F_0)| = |V(F)|$ and $c(F_0) < c(F)$ \quad \text{(Strict Component Reduction)}
\end{enumerate}
\end{theorem}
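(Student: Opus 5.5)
The plan is to attach $u$ to $F$ through its neighbours and then remove a vertex of $F$ outside $S'$, so that the new forest is no larger than $F$. Write $N=N(u)\cap V(F)$, so $|N|\ge |S'|+k$. Let $F^+$ be the graph on $V(F)\cup\{u\}$ with edge set $E(F)$ together with all edges $ux$, $x\in N$. Every candidate $F_0$ will be a spanning forest of $F^+-w$, for a well-chosen set $w$ of vertices of $V(F)\setminus S'$. After that we repeatedly prune leaves not in $S'\cup\{u\}$; pruning never increases the vertex count or the number of components. If $u$ itself ends as a leaf not in $S'$, we must instead keep $u$ as an internal vertex, and the choice of $w$ has to guarantee this.

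The first step is a structural count on $F$. Call a vertex of $V(F)\setminus S'$ a \emph{branch vertex} if its $F$-degree is at least $3$, and a \emph{path vertex} if its $F$-degree is $2$. Since every leaf lies in $S'$, the handshake identity gives, in each component $T$ with $L_T\ge 2$ leaves,
\begin{equation*}
\sum_{v\in V(T)\setminus S'}(\deg_T v-2)\;\le\; L_T-2 .
\end{equation*}
Summing over components yields $\sum_{v\in V(F)\setminus S'}(\deg_F v-2)\le |S'|-2c$; components that are single vertices of $S'$ must be treated separately. Now suppress the path vertices. This turns $F$ into a forest $F^*$ on $S'\cup B$, where $B$ is the set of branch vertices. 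Each edge of $F^*$ corresponds to a \emph{segment} of $F$, that is, a path whose interior consists of path vertices. Every neighbour of $u$ in $F$ is therefore either in $S'\cup B$ or interior to exactly one segment.

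The reduction moves are as follows.

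\textbf{(a) Merging components.} Suppose $c\ge 2$ and $u$ has neighbours $x,y$ in distinct components. Add $ux$ and $uy$. If some path vertex $w$ can then be deleted while every $S'$-vertex stays connected in the same way, we obtain the same vertex count and fewer components, or fewer vertices after pruning. Otherwise, $u$ replaces a whole segment interior.

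\textbf{(b) Shortcutting a segment.} Suppose $x$ and $y$ are neighbours of $u$ on a common segment, with $m\ge 2$ vertices strictly between them. Replacing the subpath $x\cdots y$ by $x\,u\,y$ gives strictly fewer vertices.

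\textbf{(c) Merging within one component.} Suppose $m=1$. Replacing that single path vertex by $u$ keeps the vertex count the same. It decreases the component count only if it also links two components. So when $c=1$ it is useless, and we need one more neighbour.

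The main obstacle, and the place where $k=2$ versus $k=1$ enters, is a pigeonhole argument showing that when no move of type (a) or (b) exists, $|N|\le |S'|+k-1$. The intended argument runs as follows. If (a) fails, then all of $N$ lies in one component $T$ (or $c=1$). If (b) fails, then on each segment the neighbours of $u$ occur with gaps of at most one vertex. I then charge neighbours to the vertices of $S'\cup B$ and to the segments, and use the degree bound above to absorb $|B|$. My first attempt will be a cleaner variant: take two neighbours $x,y$ of $u$ that are as far apart as possible in $T$. Then add $u$, delete an interior vertex of the $x$–$y$ path in $F$ chosen to have the smallest possible degree, and reconnect the resulting pieces through $u$. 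This is possible only when $u$ has a neighbour in each resulting piece that contains an $S'$-vertex, and I expect the count $|S'|+k$ to be exactly what forces this. If that variant fails, I will fall back to the explicit segment charging.
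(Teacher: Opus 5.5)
There is a genuine gap, and it sits exactly at the step you call the main obstacle. The pigeonhole claim, that in the absence of moves (a) and (b) one has $|N|\le |S'|+k-1$, is false. Your moves only shortcut inside a single segment or delete a single path vertex. They miss reductions that discard branch vertices and reroute several terminals through $u$ at once.

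Here is a counterexample for $c=1$. Let $F$ be the spider with centre $z\notin S'$ and legs $z\,a_i\,s_i$ for $i=1,2,3$, let $S'=\{s_1,s_2,s_3\}$, and let $N=\{z,a_1,a_2,a_3,s_1\}$, so $|N|=|S'|+2$. Every segment has only three vertices, so (b) never applies; (a) is unavailable and (c) is size-neutral. Yet $u\,s_1$, $u\,a_2\,s_2$, $u\,a_3\,s_3$ form a valid $F_0$ on $6<7$ vertices. If $u$ is adjacent to every vertex of a spider with $L\ge 3$ legs of length $2$, then $|N|=2|S'|+1$ and there is still no (b)-move, so no charging scheme can recover the bound.

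The same spider defeats (a) for $c=2$. Add an isolated terminal $s_4$ and take $N=\{s_4,z,a_1,a_2,a_3\}$. Then $N$ meets both components, but no path vertex $a_i$ can be deleted, since it is the only link to $s_i$. The vertex that must go is the branch vertex $z$.

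Your fallback also fails on the first example. The farthest pair of neighbours is $s_1,a_2$, the minimum-degree interior vertex is $a_1$, and deleting it and reconnecting through $u$ gives a tree on $7$ vertices again. In general, deleting one vertex and inserting $u$ is size-neutral. So for $c=1$ you need a mechanism that provably discards at least two vertices of $F$---precisely where $k=2$ must enter---and the proposal gives none.

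The missing idea is a global rerouting.
\begin{itemize}
\item For each $s\in S'$, take a shortest path $P_s$ in its component of $F$ from $s$ to $N$. It meets $N$ only in its endpoint $y_s$.
\item Put $Y=\{y_s: s\in S'\}$. Then $|Y|\le|S'|$, and the at least $k$ vertices of $N\setminus Y$ lie on no $P_s$.
\item Form the union of the $P_s$, the vertex $u$, the edges $uy_s$, and the components of $F$ missed by $N$. It has at most $|V(F)|-k+1$ vertices.
\item Prune this to a minimal subforest keeping the same terminals connected. It is then acyclic with all leaves in $S'$.
\end{itemize}

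This settles $c=1$. The pruned forest keeps $u$: otherwise the paths alone would connect $S'$ inside the tree $F$, hence cover all of $F$ (its leaves lie in $S'$). That forces $N\subseteq Y$, although $|N|>|S'|\ge|Y|$. Being a non-terminal of a minimal Steiner tree, $u$ is not a leaf, and $|V(F_0)|\le|V(F)|-1$.

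For $c\ge 2$ there are two cases. If $N$ lies in one component $T_i$, then $|N|\ge|S'|+1\ge|S'\cap V(T_i)|+2$, and the $c=1$ case applies to $T_i$. Otherwise $u$ is the only link between two components, so it survives pruning, the component count drops, and $|V(F_0)|\le|V(F)|$.
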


\begin{proof}
Partition $S'$ across the components $T_1 \dots T_c$ of $F$ as $S'_i = S' \cap V(T_i)$. Since all leaves of $F$ are in $S'$, each $T_i$ is minimally spanned by $S'_i$. 
Let $N = N_W(u) \cap V(F)$. We have $|N| \ge |S'| + k$. Let $I$ be the index set of components containing at least one vertex of $N$. 

For each $i \in I$ and $s \in S'_i$, let $P_s$ be a shortest path in $T_i$ from $s$ to $N$. Let $y_s \in N$ be its terminus. Define $F_{\text{ess}}^{(i)} = \bigcup_{s \in S'_i} P_s$, and let $Y_i = \{y_s \mid s \in S'_i\}$. Let $Y = \bigcup Y_i$. Clearly, $|Y| \le |S'|$.

We construct an \textbf{intermediate graph} $F'_0$:
\[
F'_0 = \left( \bigcup_{i \in I} F_{\text{ess}}^{(i)} \right) \cup \{u\} \cup \{uy \mid y \in Y\} \cup \left( \bigcup_{j \notin I} T_j \right)
\]
While $F'_0$ connects the active components through $u$ and spans $S'$, it might contain cycles if paths in $F_{\text{ess}}^{(i)}$ coalesce and then diverge to multiple targets in $Y_i$. We resolve this by taking $F_0$ to be a \textbf{minimal spanning sub-forest} of $F'_0$ with respect to $S'$. By minimality, $F_0$ is cycle-free and all leaves of $F_0$ are in $S'$. 

We must guarantee $u \in V(F_0)$. If $u$ were removed during minimization, $F_0$ would be entirely contained within $F \setminus N$. This implies $\bigcup F_{\text{ess}}^{(i)}$ inherently spans $S'$, which forces $F_{\text{ess}}^{(i)} = T_i$ and $N = Y$. However, our threshold guarantees $|N| \ge |S'| + 1$, while $|Y| \le |S'|$, a contradiction. Thus, $u$ survives minimization.

Because $F_0 \subseteq F'_0$, any neighbor of $u$ in $N \setminus Y$ is completely excluded from $F_0$.

If $c = 1$, $k=2$. Thus $|N \setminus Y| \ge |S'|+2 - |S'| = 2$. The order strictly decreases: $|V(F_0)| \le |V(F)| - 2 + 1 < |V(F)|$, satisfying the requirement of our theorem. 

If $c \ge 2$, $k=1$. If $|I|=1$, then $d(u, T_i)\ge |S'|+1\ge |S_i'|+2$ for some sole component $T_i$. It follows that $W[T_i\cup \{u\}]$ has a smaller tree with desired properties. So $|I| \ge 2$.  Then the components merge into one component.  Thus $c(F_0) < c(F)$. Furthermore, $|V(F_0)| \le |V(F)| - |N \setminus Y| + 1 \le |V(F)|$.
\end{proof}

We now classify the exact tight structure when $F$ is a single tree ($c=1$), the degree is lowered to exactly $d(u, F) = |S'| + 1$, and \textit{no} reduction is possible. We will not use this result in our proof, but again, it may be of independent interest. 

\begin{theorem}[Extremal Structure]\label{thm:extremal}
Let $F$ be a tree bounding $S'$ such that $\text{Leaves}(F) \subseteq S'$. Let $u \notin V(F)$ such that $d(u, F) = |S'| + 1$. Suppose $W[V(F) \cup \{u\}]$ contains NO strictly smaller forest $F_0$ spanning $S'$ with $u \in V(F_0)$ and $\text{Leaves}(F_0) \subseteq S'$. 
Then $F$ is a subdivided star with a unique central vertex $z$ of degree $|S'|$, and $N_W(u) \cap V(F)$ consists exactly of $z$ and its $|S'|$ immediate neighbors.
\end{theorem}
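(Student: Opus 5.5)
We argue by contraposition from the construction in the proof of Theorem~\ref{thm:unified}. Let $N=N_W(u)\cap V(F)$, so $|N|=|S'|+1$. For each $s\in S'$ let $P_s$ be a shortest path in $F$ from $s$ to $N$, with terminus $y_s$, and put $Y=\{y_s: s\in S'\}$. Let $F_0$ be a minimal sub-forest of $F'_0=\bigcup_s P_s\cup\{u\}\cup\{uy: y\in Y\}$ that spans $S'$; it is a tree since $c=1$. The vertex $u$ survives the minimization exactly as before, because $|N|>|S'|\ge |Y|$. Hence
\[
|V(F_0)|\le |V(F)| - |V(F)\setminus V(F'_0)| + 1 .
\]
Under the no-reduction hypothesis, $V(F)\setminus V(F'_0)$ has at most one vertex. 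Since $N\setminus Y$ is contained in that set and is nonempty, we get:
\begin{itemize}
\item[(a)] $|Y|=|S'|$, so the $y_s$ are pairwise distinct;
\item[(b)] $N\setminus Y=\{z\}$ for a single vertex $z$;
\item[(c)] every vertex of $F$ other than $z$ lies on some $P_s$.
\end{itemize}
Equality must also hold in the minimization step, so $F_0=F'_0$ contains all edges $uy_s$.

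Next we pin down the paths. Each $P_s$ meets $N$ only at its endpoint. Two paths $P_s,P_{s'}$ cannot share a vertex: otherwise one of them could be rerouted to a nearer or equal terminus, and some choice would give $y_s=y_{s'}$. To make this rigorous, choose the family $\{P_s\}$ to minimize $|Y|$ among shortest-path choices and invoke (a). So the $P_s$ are vertex-disjoint paths, each running from $s$ to $y_s$ and internally avoiding $N$. By (c), $V(F)=\{z\}\cup\bigsqcup_s V(P_s)$.

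Since $F$ is a tree, the paths $P_s$ must be connected to each other, and the only vertex available for this is $z$. Edges of $F$ between different paths are ruled out: such an edge plus $u$ would let us drop a segment and reduce the order. Precisely, if $a\in P_s$ is adjacent to $b\in P_{s'}$, replace $P_{s'}$ by the route $s'\to b\to a \to y_s$ and delete $y_{s'}$. This yields a smaller tree through $u$ unless the count fails, and we check that it does not fail. Thus $F-z$ is the disjoint union of the paths $P_s$, and $z$ has a neighbor in each of them.

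Because all leaves of $F$ lie in $S'$, the vertex $z$ is not a leaf, assuming $z\notin S'$; if $z\in S'$, then $P_z$ is trivial, which contradicts $y_z\ne z$. Each path $P_s$ has the leaf $s$ at one end. Suppose $z$ attached to $P_s$ at a vertex $w$ other than $y_s$. Then the segment of $P_s$ beyond $w$ toward $y_s$ would end in the vertex $y_s$, which would be a leaf of $F$ not in $S'$ unless $y_s=s$. One then checks that $F$ with $uz$ and $uy_s$ exchanged gives a reduction. Hence $z$ is adjacent exactly to the endpoints $y_s$, once each: two edges from $z$ into the same path would create a cycle. So $F$ is a subdivided star with center $z$ of degree $|S'|$, and its branches are the paths $P_s$ running from $y_s$ out to $s$. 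Finally $N=Y\cup\{z\}$, which is $z$ together with its $|S'|$ immediate neighbors.

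The main obstacle is the rerouting argument of the second and third paragraphs: showing that disjointness of the $P_s$, the absence of cross edges, and the attachment of $z$ exactly at the $y_s$ are each forced, since each violation must be converted into an explicit smaller tree through $u$. I would handle all of these uniformly. Take any tree $F^*$ in $F\cup\{u\}$ containing $u$ and $S'$, with leaves in $S'$, such that $F^*$ omits at least two vertices of $F$. This $F^*$ is a strict reduction. Each bad configuration omits $z$ together with one additional vertex (a vertex of $N$ or an interior path vertex), which gives the count needed.
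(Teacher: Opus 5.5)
\paragraph{Where the proposal breaks.} Your opening steps are correct. For every admissible choice of shortest paths you get $|Y|=|S'|$, $N\setminus Y=\{z\}$ and $V(F)\setminus\{z\}\subseteq\bigcup_s V(P_s)$. The disjointness of the $P_s$ also follows as you indicate: a shared vertex lets you re-choose one shortest path so that two termini coincide, contradicting (a).

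The proof breaks at the claim that $F$ has no edge between two different paths. Suppose $ab\in E(F)$ with $a\in P_s$ and $b\in P_{s'}$. Rerouting $s'$ through $b,a$ to $y_s$ discards only the part of $P_{s'}$ strictly beyond $b$. So it gives a strict reduction only if $b\neq y_{s'}$; rerouting $s$ instead helps only if $a\neq y_s$. An edge of $F$ joining two termini $y_s$ and $y_{s'}$ therefore saves nothing. When $s'\in N$, the path $P_{s'}$ is just $\{s'\}$, and ``deleting $y_{s'}$'' would delete a vertex of $S'$. Your ``we check that it does not fail'' is exactly the case that does fail. The same unverified exchange is reused when you locate the attachments of $z$. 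Done correctly, the reroute only shows that edges of $F$ between distinct paths join their termini, which is much weaker than a subdivided star.

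\paragraph{The statement is false.} No repair is possible. Let $S'=\{s_1,s_2,s_3\}$, let $F$ be the path $s_1\,w\,s_2\,s_3$ (its leaves $s_1,s_3$ lie in $S'$), and let $u$ be adjacent to all four vertices of $F$. Then $d(u,F)=4=|S'|+1$. Every forest containing $S'\cup\{u\}$ has at least $4=|V(F)|$ vertices, so the hypothesis holds under any reading of ``strictly smaller forest''. Yet $F$ has no vertex of degree $3$. In your notation $z=w$, all $P_{s_i}$ are trivial, and $s_2s_3$ is precisely a cross edge between termini.

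This is not an artifact of $S'$ containing an inner vertex of $F$. Take $F$ with edges $s_1p$, $s_2p$, $pq$, $qs_3$, $qs_4$, let $S'=\{s_1,s_2,s_3,s_4\}$ be its leaf set, and let $N_W(u)\cap V(F)=\{p,q,s_2,s_3,s_4\}$ with no further edges. A strictly smaller tree would have vertex set $S'\cup\{u\}$, in which $s_1$ is isolated, so no reduction exists. But $F$ has maximum degree $3<|S'|$; here $z=q$ and the cross edge is $ps_2$.

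\paragraph{The paper's proof.} It fails at the corresponding point. It expands $d(s,y_s)$ as $d(s,v_C)+d(v_C,y_s)$, tacitly placing $y_s$ beyond $v_C$ as seen from $s$, and then asserts a tie between $y_s$ and $z$. In the path example, with $s=s_3$, $v_C=s_2$, $y_s=s_3$ and $z=w$, neither holds.
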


\begin{proof}
Let $N = N_W(u) \cap V(F)$ with $|N| = |S'| + 1$. For any valid assignment of each $s \in S'$ to a closest neighbor $y_s \in N$, we can construct the intermediate graph $F'_0 = \bigcup P_s \cup \{u\} \cup \{uy_s\}$ and extract its minimal spanning sub-forest $F_0$.

As established, $|V(F_0)| \le |V(F)| - |N \setminus Y| + 1$. By the hypothesis that no strictly smaller forest exists, we must have $|V(F_0)| \ge |V(F)|$ for \textit{any} valid choice of targets. This forces $|V(F_0)| = |V(F'_0)| = |V(F)|$, and $|N \setminus Y| = 1$. Let $N \setminus Y = \{z\}$. Since $|N| = |S'|+1$, we must have $|Y| = |S'|$, meaning each boundary vertex routes to a distinct unique closest neighbor.

Since $|V(F'_0)| = |V(F)|$, the essential paths must cover exactly $V(F) \setminus \{z\}$. Removing $z$ from $F$ leaves a forest of $d_F(z)$ components. Because the essential paths cover these components and terminate at exactly $|S'|$ distinct targets in $Y$, there are at most $|S'|$ paths, bounding $d_F(z) \le |S'|$.

Consider a component $C$ of $F \setminus \{z\}$. Because $F$ is a tree, $C$ connects to $z$ via a single unique vertex $v_C$. For any $s \in S' \cap V(C)$, the path in $F$ from $s$ to $z$ goes through $v_C$. Since $y_s$ is the closest vertex in $N$ to $s$, we have $d(s, y_s) \le d(s, z)$, which expands to $d(s, v_C) + d(v_C, y_s) \le d(s, v_C) + 1$, forcing $d(v_C, y_s) \le 1$. 

Suppose for contradiction $y_s \neq v_C$. Then $d(v_C, y_s) = 1$, making $y_s$ a branch off $v_C$. In this layout, $y_s$ and $z$ are equidistant from $s$, causing a tie for the closest neighbor in $N$.
If we alternatively selected $z$ as the target for $s$, the new path $P'_s$ would terminate at $z$ instead of $y_s$. Since $y_s$ is a branch off $v_C$, the path $P'_s = s \dots v_C$ strictly excludes $y_s$. This alternative construction removes $y_s$ from the essential forest. The resulting new forest $F^*_0$ would bypass both $y_s$ and whatever unique vertex represents $N \setminus Y^*$, yielding $|V(F^*_0)| \le |V(F)| - 2 + 1 < |V(F)|$.
This directly violates our hypothesis that no strictly smaller forest exists.

Therefore, we must have $y_s = v_C$ universally to prevent tie-breaking reductions. This dictates that $Y$ is precisely the set of immediate neighbors of $z$. Since $Y$ covers all $|S'|$ distinct paths, $d_F(z) = |S'|$. Finally, since the components $C$ are exactly the paths $P_s$ extending from $s$ to $v_C$, $F$ is structurally identical to a subdivided star centered at $z$.
\end{proof}

\section{Dense subgraphs in massed graphs}\label{thm-4}

We introduce a metric to govern our induction. For a set $X\subseteq V(G)$, let $\rho(X)$ denote the number of edges with at least one endpoint in $X$.

\begin{definition}\label{rigid-separation}
A separation $(A, B)$ of $(G,S)$ is \emph{rigid} if the pair $(G[B], A\cap B)$ is knitted.
\end{definition}

\begin{definition}\label{mass}
Let $G$ be a graph, $S\subseteq V(G)$, and $\alpha>1$ be a real number. The pair $(G, S)$ is \emph{$\alpha \ell$-massed} if:
\begin{enumerate}
\item[(i)] $\rho(V(G)-S) \ge  \lfloor\alpha \ell |V(G)-S|\rfloor$, and
\item[(ii)] every separation $(A,B)$ of $(G,S)$ of order at most $|S|-1$ satisfies $\rho(B\setminus A) \le \alpha \ell |B\setminus A|$.
\end{enumerate}
\end{definition}

 From now on, we will try to show that $(G,S)$ is $(s_1, \ldots, s_t)$-knitted, where $\sum_{i=1}^t s_i=\ell$. Since we do not pose any restriction on the partition, this is equivalent to show that $(G,S)$ is knitted. 

\begin{definition}\label{minimal}
The pair $(G,S)$ is \emph{$(\alpha, \ell)$-minimal} if:
\begin{enumerate}
\item $(G,S)$ is $\alpha \ell$-massed, 
\item $|S|\le \ell$ and $(G,S)$ is not $(s_1, \ldots, s_t)$-knitted,
\item subject to the above, $|V(G)|$ is minimized,
\item subject to the above, $\rho(V(G)-S)$ is minimized, and
\item subject to the above, the number of edges of $G$ with both endpoints in $S$ is maximized.
\end{enumerate}
\end{definition}

We state the following theorem, which strengthens the core density arguments.

\begin{theorem}\label{mass-min}
Let $\ell\ge 1$ be an integer and $\alpha\ge 2$ be a real number. If a pair $(G,S)$ is $(\alpha, \ell)$-minimal, then $G$ has no rigid separation of order at most $|S|$. Furthermore, there is a partition of $S=S_1\cup S_2\ldots\cup S_t$ with $s_i=|S_i|$, and a subgraph $H=G[N[v]]$ in $G$ with $|V(H)|\le \lceil 2\alpha \ell\rceil$ such that  
$$d_H(u) \ge 
\begin{cases}
1+\lfloor\alpha \ell\rfloor-s_i, & \mbox{if $u \in S_i$;}\\
1+\lfloor\alpha \ell\rfloor, & \mbox{
if $u \in V(H)-S$}.
\end{cases}
$$
\end{theorem}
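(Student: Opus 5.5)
We follow the Thomas--Wollan strategy for minimal massed pairs, with extra care for the partition $S_1,\dots,S_t$. Throughout, fix an $(\alpha,\ell)$-minimal pair $(G,S)$ together with a partition $S=S_1\cup\cdots\cup S_t$ witnessing that $(G,S)$ is not $(s_1,\dots,s_t)$-knitted.

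\textbf{No rigid separations.} Suppose $(A,B)$ is a rigid separation of $(G,S)$ of order at most $|S|$ with $B\setminus A\neq\emptyset$. Let $G'$ be obtained from $G[A]$ by adding all edges inside $A\cap B$, with $S$ kept as the terminal set.
\begin{itemize}
\item Any knitting of $(G',S)$ lifts to a knitting of $(G,S)$. Each part $C_i$ of the knitting meets $A\cap B$ in some set $X_i$. Since $(G[B],A\cap B)$ is knitted, we can realise the partition of $A\cap B$ induced by the $X_i$ inside $G[B]$. Each added edge of $G[A\cap B]$ is thereby replaced by a connection in $B$, and disjointness is preserved.
\item Hence $(G',S)$ is still not knitted. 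We then show $(G',S)$ is still $\alpha\ell$-massed. For (ii), a separation of $(G',S)$ of small order is, after absorbing $B$ on the appropriate side, a separation of $(G,S)$; this uses that $A\cap B$ is a clique in $G'$, so it lies on one side. For (i), we use (ii) of $G$ applied to $(A,B)$ itself when the order is less than $|S|$: then $\rho(B\setminus A)\le\alpha\ell|B\setminus A|$, so removing $B\setminus A$ keeps $\rho(V-S)\ge\lfloor\alpha\ell|V-S|\rfloor$. When the order equals $|S|$, the added edges in $A\cap B$ compensate, or we replace $S$ by $A\cap B$.
\item $|V(G')|<|V(G)|$, which contradicts minimality (3).
\end{itemize}

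\textbf{Local structure.} Minimality (4) gives, for every edge $e$ not inside $S$, that $G-e$ or $G/e$ violates masslessness or knittedness. Since deleting edges cannot create knittings, $G-e$ is non-knitted, so it must fail (i) or (ii). Failure of (ii) would produce a separation that, combined with minimality, yields a rigid separation, contradicting the first part. So $G-e$ fails (i), which forces $\rho(V-S)=\lfloor\alpha\ell|V-S|\rfloor$ exactly.

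Now contract an edge $uv$ with $v\notin S$. The contraction stays non-knitted, because a knitting of $G/uv$ lifts to one of $G$. By minimality (3) it must fail (i): the number of edges lost, namely $1$ plus the number of common neighbours of $u$ and $v$, exceeds $\lfloor\alpha\ell\rfloor$ minus slack. This gives $|N(u)\cap N(v)|\ge\lfloor\alpha\ell\rfloor$ for every edge $uv$ with $v\notin S$.

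Deleting a vertex $v\notin S$ with $\deg v\le\alpha\ell$ would preserve (i). The exact count $\rho=\lfloor\alpha\ell|V-S|\rfloor$ then gives a vertex $v\in V-S$ of degree at most $2\alpha\ell$, by averaging $\rho$ against degrees, since each edge is counted at most twice. Set $H=G[N[v]]$, so $|V(H)|\le\lceil 2\alpha\ell\rceil$. For $u\in V(H)-S$, the common-neighbour bound on the edge $uv$ gives $d_H(u)\ge 1+\lfloor\alpha\ell\rfloor$.

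\textbf{Terminal vertices (main obstacle).} For $u\in S_i\cap V(H)$ the contraction argument is unavailable, because contracting $uv$ changes the terminal set. Instead we contract $uv$ and keep $u$ as the terminal. A knitting of $G/uv$ lifts, so the contraction must fail (i). This gives $|N(u)\cap N(v)|\ge\lfloor\alpha\ell\rfloor$ up to edges between $u$ and $S$. Those edges are counted in $\rho(V-S)$ only when they go into $V-S$.

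The loss of $s_i$ should come from maximality (5). If $u$ were non-adjacent to some $w\in S_i$, adding $uw$ keeps non-knittedness, since $u,w$ lie in the same part, and keeps masslessness. That would contradict (5). So each $S_i$ is a clique, and edges from $u$ to $S\setminus S_i$ can be handled similarly, or else counted.

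Reconciling this bookkeeping is exactly where the earlier argument had a gap. We expect to need the precise statement $d_H(u)\ge 1+\lfloor\alpha\ell\rfloor-s_i$ rather than anything stronger, with the $s_i$ accounting for edges inside $S_i$ that the contraction collapses.
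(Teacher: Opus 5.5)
Your skeleton matches the paper's: no rigid separations, then contraction gives common neighbours, deletion pins $\rho(V(G)-S)=\lfloor\alpha\ell|V(G)-S|\rfloor$, averaging gives a low-degree $v$, and $H=G[N[v]]$. But the terminal case, where the $s_i$ must come from, is not proved, and your use of condition (5) there is backwards. Adding a missing edge $uw$ with $u,w$ in the \emph{same} part $S_i$ need not preserve non-knittedness: if $S_i=\{u,w\}$, the new edge is itself a valid $C_i$. So you cannot conclude that $S_i$ is a clique.

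The correct observation is the reverse. An edge between $S_i$ and $S_j$ with $i\neq j$ can never lie in any of the disjoint connected subgraphs $C_1,\dots,C_t$. An edge inside $S$ also changes neither $\rho(V(G)-S)$ nor any $\rho(B\setminus A)$. So by (5) every $u\in S_i$ is adjacent to all of $S\setminus S_i$. Now contract $uv$ into $u$, with $u\in S_i\cap N(v)$. The drop in $\rho(V(G)-S)$ is $1+|N(u)\cap N(v)|+|N(v)\cap S\setminus N[u]|$, because each edge $vw$ with $w\in S\setminus N[u]$ becomes an edge inside $S$. Failure of (i) then gives $|N(u)\cap N(v)|\ge\lfloor\alpha\ell\rfloor-|N(v)\cap S\setminus N[u]|$. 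Since $N(v)\cap S\setminus N[u]\subseteq S_i\setminus\{u\}$, you get $d_H(u)=1+|N(u)\cap N(v)|\ge 2+\lfloor\alpha\ell\rfloor-s_i$. So the $s_i$ pays for possible \emph{non}-edges inside $S_i$, not for edges the contraction collapses.

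Several earlier steps also fail as stated.

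\textbf{Rigid separations.} You claim $G'=G[A]$ plus a clique on $A\cap B$ satisfies (ii). But if a violating separation $(A',B')$ of $G'$ has $A\cap B\subseteq B'$, the separation $(A',B'\cup B)$ of $G$ only controls $\rho_G\big((B'\setminus A')\cup(B\setminus A)\big)$. With no lower bound on $\rho(B\setminus A)$, and extra clique edges in $G'$, nothing transfers. The paper never claims $G'$ is massed. It takes the rigid separation minimizing $|A|$ and a violating $(A',B')$ with $B'$ minimal. Minimality of $G$ then makes $(G'[B'],A'\cap B')$ knitted, so $(A',B\cup B')$ is a rigid separation with a smaller $A$-side. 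For order exactly $|S|$, (ii) gives no density control at all, and the paper uses Menger. Either $|S|$ disjoint $S$--$(A\cap B)$ paths plus rigidity give a knitting, or a smaller-order cut is again rigid with smaller $A$.

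\textbf{Contraction and deletion.} For $G/uv$ you assert failure of (i) without excluding failure of (ii). The paper excludes it by lifting the minimal violating separation to a rigid separation of $G$ of order at most $|S|$. For $G-e$, your ``yields a rigid separation'' is not evident, since $e$ crosses the separation. The paper instead notes that the crossing edge $e=uv$ forces all $\ge\lfloor\alpha\ell\rfloor$ common neighbours into $A\cap B$, which has size $<\ell$. That argument needs the contraction bound proved first, so your ordering has to be reversed.

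\textbf{Size of $H$.} ``Degree at most $2\alpha\ell$'' only gives $|V(H)|\le\lfloor 2\alpha\ell\rfloor+1$, which is too large when $2\alpha\ell\in\mathbb{Z}$. You need the strict bound $\delta'<2\alpha\ell$. It follows from $\sum_{w\in V(G)-S}d(w)=2\rho(V(G)-S)-|E(V(G)-S,S)|$ together with $|E(V(G)-S,S)|>0$.
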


\begin{proof}
Since $(G,S)$ is $(\alpha, \ell)$-minimal, $(G,S)$ is not $(s_1, \ldots, s_t)$-knitted. In particular, for a partition $S_1,\ldots, S_t$ of $S$ with $s_i=|S_i|$, we cannot find disjoint connected subgraphs respectively containing $S_i$'s.  By Definition~\ref{minimal} (5), we may assume that vertices between $S_i$ and $S_j$ are all adjacent, since adding edges between $S_i$ and $S_j$ maintain $(G,S)$ to be $(\alpha, \ell)$-minimal.

We structure the proof across three fundamental claims.

\begin{claim}\label{rigid-lemma}
$G$ has no rigid separation of order at most $|S|$.
\end{claim}

\begin{proof} 
Suppose such a rigid separation $(A,B)$ exists; select one minimizing $|A|$.  

First, assume $|A\cap B| < |S|$. Let $G^*[A]$ be the graph obtained by adding all missing edges to make $A\cap B$ a clique. If $(G^*[A], S)$ is knitted, we can effortlessly transform its knits into a valid knit for $(G,S)$ by leveraging the rigidity of $(A,B)$. Since $(G,S)$ is not knitted, $(G^*[A], S)$ is not knitted.
Because $G$ is $\alpha \ell$-massed, $\rho(B\setminus A) \le \alpha \ell|B\setminus A|$. Consequently, $\rho(A\setminus S) > \alpha \ell|A\setminus S|$. This means condition (i) of being massed is satisfied, forcing condition (ii) to fail in $G^*[A]$.
Let $(A', B')$ be a separation of $G^*[A]$ containing $S \subseteq A'$ that violates (ii), minimizing $B'$. If $A\cap B \subseteq A'$, then $(A'\cup B, B')$ acts as a separation in $G$ violating (ii), which is impossible. Thus, since $A\cap B$ is a clique, $A\cap B \subseteq B'$. 
By the minimality of $B'$, the subgraph $(G^*[B'], A'\cap B')$ is itself massed and thus knitted. Consequently, $(G^*[B\cup B'], A'\cap B')$ is knitted, making $A'\cap B'$ a rigid separation of $(G,S)$ strictly smaller than $A$, contradicting our minimal choice of $A$.

Next, assume $|A\cap B| = |S|$. If there are $|S|$ disjoint paths from $S$ to $A\cap B$, the rigidity of $(A,B)$ instantly makes $(G,S)$ knitted. Otherwise, there is a separation $(A'', B'')$ of $(G[A], S)$ of order less than $|S|$ separating $S$ from $A\cap B$. Choosing the minimal $|A''\cap B''|$, we generate disjoint paths from $A''\cap B''$ to $A\cap B$. The rigidity flows back, establishing $(A'', B\cup B'')$ as a rigid separation of $(G,S)$ with $|A''| < |A|$, again contradicting minimality.
\end{proof}

\begin{claim}\label{lemma-common}
Let $uv$ be an edge with $v \notin S$. Then $d(v) \ge 1+\lfloor\alpha \ell\rfloor$. Moreover, if $u \notin S$, then $u$ and $v$ have at least $\lfloor\alpha \ell\rfloor$ common neighbors. If $u \in S$, then $u$ and $v$ have at least $\lfloor\alpha \ell\rfloor - |N(v)\cap S-N[u]|$ common neighbors.
\end{claim}

\begin{proof}
Let $G'=G/uv$ be the graph resulting from contracting the edge $uv$. If $(G', S)$ is knitted, $(G, S)$ inherits the knit. Thus $(G', S)$ must violate (i) or (ii).

Suppose $(G', S)$ violates (i). Then $\rho(V(G')-S) \le \lfloor\alpha \ell |V(G')-S|\rfloor - 1$. Since $G$ satisfies (i), $\rho(V(G)-S)\ge \lfloor\alpha \ell |V(G)-S|\rfloor$. The contraction reduces the vertex count by exactly one.
\[ \rho(V(G)-S) - \rho(V(G')-S)\ge 1+\lfloor\alpha\ell |V(G)-S|\rfloor - \lfloor\alpha\ell|V(G')-S|\rfloor 
\ge 1+\lfloor\alpha\ell\rfloor. \]
The edges lost in $\rho(V(G)-S)$ during contraction are exactly the edge $uv$, one duplicate edge for every common neighbor of $u$ and $v$ not in $S$, and (if $u \in S$) any edge from $v$ to $S$ that becomes redundant. All such edges are incident to $v$, establishing $d(v) \ge \rho(V(G)-S) - \rho(V(G')-S)\ge 1+ \lfloor\alpha\ell\rfloor$. 
If $u \notin S$, the exact number of lost edges is $1 + |N(u) \cap N(v)|$, yielding $|N(u) \cap N(v)|\ge\lfloor \alpha \ell\rfloor$. If $u \in S$, the lost edges number $1 + |N(u) \cap N(v)| - |N(v) \cap S - N[u]|$, preserving the inequality.

Alternatively, assume $(G',S)$ violates (ii). Let $(A',B')$ be the violating separation with minimal $B'$. By minimality, $(G'[B'], A'\cap B')$ is knitted, meaning $(A', B')$ acts as a rigid separation in $G'$. In the pre-contraction graph $G$, this induces a separation $(A,B)$. If $\{u,v\} \not\subseteq A\cap B$, then $(A,B)$ operates as a rigid separation of $(G,S)$ of order at most $|S|-1$, decisively violating Claim~\ref{rigid-lemma}. If $u,v \in A\cap B$, the minimality of $B'$ ensures $(G[B], A\cap B)$ is $\alpha\ell$-massed and therefore knitted, creating a rigid separation of size $\le |S|$, violating Claim~\ref{rigid-lemma} once more.
\end{proof}

\begin{claim}
Let $\delta'$ be the minimum degree in $G$ among vertices in $V(G)-S$. Then $\alpha \ell < \delta' < 2\alpha \ell$. 
\end{claim}

\begin{proof}
Claim~\ref{lemma-common} guarantees $\delta' > \alpha \ell$. We must prove $\delta' < 2\alpha \ell$. Let $v \in V(G)-S$ be an arbitrary vertex. Since $d(v) > \alpha \ell \ge 2\ell > |S|$, $v$ possesses at least one neighbor $u \in V(G)-S$. Let $e = uv$, and consider $G_1 = G-e$. Because $G$ is minimal, $G_1$ must fail (i) or (ii).

If $G_1$ fails (ii), $(G-e, S)$ contains a separation $(A,B)$ with $|A\cap B|<|S|$. The edge $e$ must cross this separation, forcing $u \in A\setminus B$ and $v \in B\setminus A$. Therefore, their common neighbors are restricted to $A\cap B$. This implies $|N(u)\cap N(v)| \le |A\cap B| < |S| \le \ell < \alpha \ell$, contradicting Claim~\ref{lemma-common}.

Thus, $G_1$ must fail (i), yielding $\rho(V(G_1)-S) \le \lfloor\alpha \ell |V(G)-S|\rfloor - 1$. Since removing $e$ drops the edge count in $V(G)-S$ by precisely 1, we find exactly $\rho(V(G)-S) \le \lfloor\alpha \ell |V(G)-S|\rfloor$.


The sum of the degrees of the vertices in $V(G)-S$ satisfies:
\begin{align*} \sum_{w \in V(G)-S} d_G(w) &= 2|E(V(G)-S)| + |E(V(G)-S, S)| = 2\rho(V(G)-S) - |E(V(G)-S, S)|\\ 
&\le 2\lfloor\alpha \ell |V(G)-S|\rfloor -|E(V(G)-S, S)| < 2\lfloor\alpha \ell |V(G)-S|\rfloor\le 2\alpha \ell |V(G)-S|. \end{align*}
since $|E(V(G)-S, S)|> 0$.
By the Pigeonhole Principle, the minimum degree among these vertices, $\delta'$, must be strictly less than $2\alpha \ell$.
\end{proof}

Finally, select a vertex $v\in V(G)-S$ with degree exactly $\delta'$. Let $H$ be the subgraph induced by $v$ and its neighborhood $N(v)$. The number of vertices in $H$ is $\delta'+1<1+ 2\alpha \ell$. 
By Claim~\ref{lemma-common}, any $u \in V(H)$ with $u \notin S$ has $d_H(u) = 1 + |N(u) \cap N(v)| > \alpha \ell$. For any $u \in S_i$, its degree within $H$ is strictly greater than $\alpha \ell - 1 - |N(v) \cap S - N[u]| + 1 \ge \alpha \ell - |S_i|+1$, since $u$ is adjacent all vertices in $S-S_i$. 
This constructed subgraph $H$ completely fulfills all requirements of the theorem.
\end{proof}

\section{Dense graphs contain  knitted subgraphs}\label{thm-2}

In this section, we show the dense graph $H$ from Section~\ref{thm-4} contains a knitted subgraph. This is a critical ingredient for a proof of our main result.

We will make use of the following result:

\begin{theorem}[Liu, Rolek, and Yu~\cite{LRY19}]\label{LRY}
Let $G$ be a graph with $n \ge 2\ell + 3$ vertices, where $\ell \ge 5$. If $\delta(G) \ge \frac{n+\ell}{2} - 1$, then $G$ is $\ell$-knitted.
\end{theorem}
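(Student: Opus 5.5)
The statement is Theorem~\ref{LRY}, due to Liu, Rolek and Yu. My plan is to treat it as a degree-condition result in the style of Ore/Dirac-type linkage theorems. I would argue by induction on the number of terminals, using the fact that the degree condition leaves every pair of vertices with many common neighbours.

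Fix $S$ with $|S|=\ell$ and a partition $S_1,\dots,S_t$. Since $\delta(G)\ge \frac{n+\ell}{2}-1$, any two vertices $x,y$ satisfy
\[
|N(x)\cap N(y)|\ge 2\delta-(n-2)\ge \ell .
\]
So any two vertices have at least $\ell$ common neighbours. Among these, at least $\ell-|S|+2\ge 2$ lie outside $S\setminus\{x,y\}$, counted appropriately.

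The construction proceeds greedily as follows.
\begin{enumerate}
\item Process the parts one at a time. For each part $S_i=\{x_1,\dots,x_{s_i}\}$, connect consecutive vertices $x_j,x_{j+1}$ either by the edge $x_jx_{j+1}$ or by a common neighbour outside $S$ not yet used.
\item Each such connector costs at most one vertex of $V(G)\setminus S$, so at most $\ell-t\le \ell-1$ extra vertices are consumed in total.
\item Before each step, compare the common neighbourhood bound $2\delta-n+2$ with the number of forbidden vertices. The forbidden vertices are the remaining terminals plus the connectors already chosen.
\end{enumerate}

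The main obstacle is this count. A crude bound gives about $(\ell-2)+(\ell-1)$ forbidden vertices against only $\ell$ common neighbours, so the naive greedy argument fails.

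To fix the count, I would use induction on $\ell$ (or on $n$) together with the hypothesis $n\ge 2\ell+3$.
\begin{enumerate}
\item Remove an already-connected part $S_i$ together with its connector vertices, say $r$ vertices in all. Then $n$ drops by $r$ and $\ell$ drops by $s_i$.
\item Degrees drop by at most $r$, so the inequality $\delta'\ge \frac{n'+\ell'}{2}-1$ is preserved only when $r\le s_i$ roughly. This holds if the part was spanned by a path using few outside vertices, which can be arranged by choosing a Hamiltonian-type path through $S_i$ via common neighbours.
\item When the bound is tight, perform a case analysis. The first thing to try is picking the connector vertices of maximum degree, or contracting an edge inside $S_i$ whenever $S_i$ is not independent.
\item The condition $\ell\ge 5$ and the small-$n$ threshold would be used to handle base cases by direct inspection.
\end{enumerate}

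Alternatively, since this is the cited theorem from \cite{LRY19}, within this paper one simply invokes it and needs no proof here.
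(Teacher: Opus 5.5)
The paper gives no proof of this statement. It is quoted from \cite{LRY19} and used as a black box in Section~\ref{thm-2}, so your closing sentence is exactly how the paper treats it. The self-contained argument you sketch before that sentence does not work, however, and it fails in the step meant to repair the greedy count.

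Suppose you delete a finished part $S_i$ together with its connectors, $r\ge s_i$ vertices in all. Then $n'=n-r$ and $\ell'=\ell-s_i$, and the only general bound is $\delta'\ge\delta-r$, since a remaining vertex may be adjacent to every deleted one. The required inequality $\delta'\ge\frac{n'+\ell'}{2}-1$ then becomes $\delta\ge\frac{n+\ell}{2}-1+\frac{r-s_i}{2}$. The hypothesis has no slack for the extra term, so you need $r=s_i$. That means $G[S_i]$ must already be connected and \emph{no} outside vertex may be used. A ``Hamiltonian-type'' path through common neighbours cannot help, because a single connector already breaks the inequality.

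Your contraction idea is a legitimate reduction. Contracting an edge inside $S_i$ lowers $n$ and $\ell$ by one and the minimum degree by at most one, which preserves the hypothesis. But it only reduces you to the case where every $S_i$ is independent. That is exactly the case where connectors are unavoidable, so the induction breaks down there.

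Nor is the ``tight'' case a corner case you can settle by ad hoc analysis: the bound is sharp. For $n+\ell$ even, take two cliques of order $\frac{n+\ell-2}{2}$ sharing $\ell-2$ vertices. This graph has minimum degree $\frac{n+\ell}{2}-2$ and is not $(\ell-1)$-connected, hence not $\ell$-knitted.

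Two smaller points:
\begin{itemize}
\item Your common-neighbour bound $2\delta-(n-2)\ge\ell$ holds only for non-adjacent pairs; adjacent pairs get only $\ell-2$.
\item An induction on $\ell$ leaves the range $\ell\ge 5$ in which the statement is available, so the small cases would need separate proofs.
\end{itemize}

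What is missing is a global argument that does not pay for connectors one at a time. For example, take a counterexample and an extremal partial knit: disjoint forests $C_i\supseteq S_i$ with all leaves in $S$, minimizing first the number of components and then the order. Bound how many neighbours an outside vertex can have in each forest (compare Theorem~\ref{thm:unified}). Then run a degree count in $G-V(C)$ against the sharp threshold. Without such an argument, your proposal establishes the statement only by citation, which is also all the paper does.
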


\begin{theorem}\label{subgraph}
Let $\ell \ge 5$ be an integer, and let $\alpha \ge 4$. Let $H$ be a graph with $|V(H)| \le  \lceil 2\alpha \ell \rceil$, let $T \subseteq V(H)$ with $|T| \le \ell$. For any $S \subseteq V(H)$ such that $T \subseteq S$ and $|S| = \ell$, and a partition of $S=\cup_{i=1}^t S_i$ with $s_i=|S_i|$, if $v \in V(H)$, 
\[
d(v)\ge 
\begin{cases}
1+\lfloor\alpha\ell\rfloor,  &  \mbox{if $v \in V(H) - T$};\\
    1+\lfloor\alpha \ell\rfloor-s_i, & \mbox{if $v \in T\cap S_i$}.
\end{cases}
\]
then $H$ contains an $(s_1, \ldots, s_t)$-knitted subgraph.
\end{theorem}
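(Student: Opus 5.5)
We will reduce the statement to Theorem~\ref{LRY} by peeling off low-degree vertices until what remains is so dense that the Liu--Rolek--Yu minimum-degree condition holds. Put $n=|V(H)|\le\lceil 2\alpha\ell\rceil$. Every vertex of $H-T$ has degree at least $1+\lfloor\alpha\ell\rfloor$, which is about $n/2$. Vertices of $T\cap S_i$ may have degree lower by $s_i$. The target is a subgraph $H'\subseteq H$ with $S\subseteq V(H')$, $|V(H')|=n'\ge 2\ell+3$ and $\delta(H')\ge (n'+\ell)/2-1$. Theorem~\ref{LRY} then says $H'$ is $\ell$-knitted, so $(H',S)$ is knitted for every partition of $S$, and in particular $H'$ is $(s_1,\ldots,s_t)$-knitted.

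The first step is to compare degrees with the threshold directly. Since $\alpha\ge 4$, we have $1+\lfloor\alpha\ell\rfloor\ge \alpha\ell$ and $(n+\ell)/2-1\le \alpha\ell+\ell/2$. The raw degrees therefore fall short of the LRY threshold by roughly $\ell/2$, plus up to $s_i$ for vertices of $T$. This deficit has to be recovered from the partition structure and from passing to a smaller subgraph.

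The second step handles the deficit coming from the partition. The degree bound $1+\lfloor\alpha\ell\rfloor-s_i$ for $v\in T\cap S_i$ is exactly what Theorem~\ref{mass-min} yields after completing all edges between distinct parts $S_i,S_j$. We will do the same here: add all such edges. This is harmless, because a knit for the partition never uses an edge between two different parts, since each $C_i$ lies in its own part. After this addition, $v\in S_i$ is adjacent to all of $S\setminus S_i$, and each vertex of $T$ has degree at least $1+\lfloor\alpha\ell\rfloor-s_i$ with these cross edges included. Alternatively, we can contract each $S_i$ along a spanning structure once it is found.

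The third step does the peeling. Iteratively delete vertices of $V(H)\setminus S$ whose degree is below $(n'+\ell)/2-1$ in the current graph $H'$, with $n'$ the current order. Each deletion lowers the threshold by $1/2$, while a deleted vertex's neighbors each lose at most $1$ degree. A counting argument in the style of Mader's lemma (compare edges with $(\alpha\ell+\ell/2)|V|$) should show that this process cannot eat too much. The key observation is that if more than about $n/2$ vertices are deleted, then the deleted set spans too few edges relative to the degree lower bound $1+\lfloor\alpha\ell\rfloor$ that every vertex of $H-T$ originally had.

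The process ends in one of two ways. Either it stops with $n'\ge 2\ell+3$ and the LRY condition holds, or the remaining graph is small but has minimum degree over half its order, which is even better. Vertices of $S$ are never deleted. They are handled by checking that even with deficit $s_i$, a vertex of $S_i$ in a graph of order $n'\le 2\alpha\ell$ meets the threshold once the cross edges are present.

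The main obstacle is exactly this deficit. LRY needs degree about $(n+\ell)/2$, but the hypothesis only gives about $n/2$ when $n$ is near $2\alpha\ell$. The peeling must genuinely shrink $n$ by roughly $\ell$, and the counting has to certify this rather than just assume it. If the counting fails, the fallback is to pick $H'$ as a subgraph maximizing $e(H')-(\alpha\ell)|V(H')|$ over sets containing $S$, and derive the minimum-degree bound from that maximality.
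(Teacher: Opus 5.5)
\textbf{There is a genuine gap at the peeling step, and it cannot be closed.} The hypotheses do not force \emph{any} subgraph that satisfies the hypothesis of Theorem~\ref{LRY}.

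Take $\alpha=4$ and $\ell\ge 5$. Let $H$ be $K_{4\ell,4\ell}$ with a perfect matching added inside each side. Then $|V(H)|=8\ell=\lceil 2\alpha\ell\rceil$, and every vertex has degree $4\ell+1=1+\lfloor\alpha\ell\rfloor$, so the hypotheses hold for every choice of $T$, $S$ and partition.

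Now let $H'\supseteq S$ have $n'\ge 2\ell+3$ vertices, $a$ on one side and $b$ on the other. You may also add all edges between distinct parts $S_i,S_j$; they only touch $S$, so they change nothing below. In $H'$, a vertex of $H'-S$ on the first side has degree at most $b+1$, and one on the second side has degree at most $a+1$.
\begin{itemize}
\item If both sides contain vertices of $H'-S$, then $\delta(H')\le\min(a,b)+1\le n'/2+1<(n'+\ell)/2-1$, because $\ell\ge 5$.
\item Otherwise all of $H'-S$, which is nonempty since $n'>\ell$, lies on one side. The other side then has at most $\ell$ vertices of $H'$, so $\delta(H')\le \ell+1<(3\ell+1)/2\le(n'+\ell)/2-1$.
\end{itemize}

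So no order of deletions and no choice of $H'$ reaches the threshold of Theorem~\ref{LRY}. Your fallbacks do not help either:
\begin{itemize}
\item Maximizing $e(H')-\alpha\ell|V(H')|$ gives only $\delta\ge\alpha\ell$, with no control on $|V(H')|$, so the same $\ell/2$ deficit remains.
\item The alternative ending, a small graph with minimum degree over half its order, is also below the threshold and does not give knittedness.
\end{itemize}
This $H$ is in fact knitted, so the theorem is not contradicted; only your method fails.

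\textbf{The missing idea} is a way to cut the order by about $\alpha\ell$ while each surviving vertex loses only about $\ell$ in degree. That requires using the terminals. The paper proceeds as follows.
\begin{itemize}
\item Take an optimal partial knit $C=\bigcup C_i$: forests with $S_i\subseteq V(C_i)$ and $|V(C)|\le\min_{z\in S}d_H(z)$, with the fewest components and then the fewest vertices. Assume it is not a knit.
\item The forest-reduction Theorem~\ref{thm:unified} shows that every $u\notin V(C)$ has $d(u,C_i)\le s_i+1$, so $d(u,C)\le \ell+c$ with $c\le(\ell-2)/2$.
\item A short-path argument gives $|V(C)|\ge\min_{z\in S}d_H(z)-4$.
\item Hence $H-C$ has order at most $\lceil 2\alpha\ell\rceil-\min_{z\in S}d_H(z)+4$ and minimum degree at least $(\alpha-1)\ell-c$.
\end{itemize}
It is $H-C$, not a subgraph containing $S$, that the paper places in the range of Theorem~\ref{LRY}. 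Deleting arbitrary low-degree vertices gives no analogue of the bound $d(u,C)\le \ell+c$, and that is exactly what breaks in the example above.
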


\begin{proof}[Proof of Theorem~\ref{subgraph}]
Assume, for the sake of contradiction, that there is a subset $S \subseteq V(H)$ with $T \subseteq S$ and $|S|=\ell$, and a partition $S=\bigcup_{i=1}^{t} S_i$ such that $H$ does not contain disjoint connected subgraphs corresponding to each $S_i$. We analyze a \emph{partial} $(\ell, t)$-knit $C=\bigcup_{i=1}^t C_i$, which is a subgraph of $H$ wherein $S_i\subseteq V(C_i)$ for all $i$, but the components $C_i$ are not strictly required to be connected.

We define an \emph{optimal} $(\ell, t)$-knit $C=\bigcup_{i=1}^t C_i$ as a partial $(\ell, t)$-knit that satisfies the following conditions in decreasing order of priority:
\begin{enumerate}
\item[(a)] $|V(C)|\le \min_{z\in S} d_H(z)$;
\item[(b)] the total number of connected components of $C$ is minimized;
\item[(c)] subject to (a) and (b), $|V(C)|$ is minimized.
\end{enumerate}

By the optimality of $C$, any component of $C$ containing exactly one vertex of $S$ must be an isolated vertex, and any component containing exactly two vertices of $S$ must be a path connecting them. We may assume without loss of generality that $S_1\subseteq V(C_1)$, but $C_1$ is not connected. Thus, there exist $x, y\in S_1$ belonging to different connected components of $C_1$.

Since  $|V(C)|\le \min_{z\in S} d_H(z)\le d(x)$ and $x\in C$, $d(x, H-C)\ge 1$; in particular, $H-C \neq \emptyset$.


\begin{lemma}\label{delta-star}
$\delta^* \ge (\alpha - 1)\ell-c$, where $c=|\{s_i\ge 2: 1\le i\le t \mbox{ and $C_i$ is connected}\}|$. 
\end{lemma}

\begin{proof}
For every $u\in V(H-C)$, we have $$d(u, H-C) = d(u,H) - d(u,C)\ge 1+\lfloor\alpha\ell\rfloor- d(u,C)\ge \alpha\ell-d(u,C).$$ 

We now bound $d(u,C)$. Let $C_i$ be the forest containing $S_i$ in $C$. Then if $s_i=1$, the component $C_i$ is a single vertex, so $d(u, C_i) = 1 = s_i$; if  $s_i\ge 2$, Theorem~\ref{thm:unified} implies $d(u, C_i)\le s_i+1$, with equality only if $C_i$ is a tree and $s_i\ge 2$. It follows that $$d(u, C) = \sum_{C_i} d(u,C_i) \le |S|+c=\ell+c.$$ The lemma follows.
\end{proof} 

Since $x,y$ are not in the same component, so $c\le \frac{\ell-2}{2}$;  furthermore, for each $S_i\subseteq S$ with $x,y\not\in S_i$, $$c\le \frac{\ell-2-s_i}{2}+1=\frac{\ell-s_i}{2}.$$

\begin{lemma}
The subgraph $H-C$ is connected.
\end{lemma}

\begin{proof}
Suppose that $H-C$ has components $H_1, \ldots, H_p$ where $p\ge 2$. Since $H_i$ does not inherently contain an $\ell$-knit, and $|V(H_i)|\ge \delta^*\ge 2.5\ell$, by Lemma~\ref{delta-star} and $c\le \ell/2$ and $\alpha\ge 4$. By Theorem~\ref{LRY}, $\delta^* < (|V(H_i)|+\ell)/2-1$. It follows that $$|V(H_i)| > 2\delta^*-\ell+2\ge (2\alpha-3)\ell-2c+2\ge (2\alpha-4)\ell+2.$$
Since $p \ge 2$, we sum the sizes:
$|V(H)| \ge |V(C)| + |V(H_1)| + |V(H_2)| > \ell + 2(2\alpha-4)\ell + 4$.
Given $|V(H)| < \lceil 2\alpha \ell\rceil$, this implies $\lceil 2\alpha \ell\rceil > (4\alpha-7)\ell + 4$, which simplifies to $(2\alpha-7)\ell\le -3$. This contradicts our hypothesis that $\alpha \ge 3.5$.
\end{proof}

\begin{lemma}
$|V(C)| \ge \min_{z\in S} d_H(z)-4$. 
\end{lemma}

\begin{proof}
Assume that $|V(C)| \le \min_{z\in S} d_H(z)-5$. 
Let $A=N(x)\cap V(H-C)$ and $B=N(y)\cap V(H-C)$. Let $A'=N(A)\cap V(H-C)-A$ and $B'=N(B)\cap V(H-C)-B$. Finally, let $D=V(H-C)-(A\cup A'\cup B\cup B')$. Since $H-C$ is connected, there is a path from $x$ to $y$ through $A\cup A'\cup D\cup B'\cup B$. If such a path has length at most $6$, we could append this path to $C$ to form a new knit $C'$. This new knit $C'$ would strictly reduce the number of connected components. Furthermore, its size would be bounded by $|V(C')| \le |V(C)| + 5 \le \min_{z\in S} d_H(z)-5+5=\min_{z\in S} d_H(z)$. This directly contradicts the assumption that $C$ is an optimal knit.

Since no such short path exists, take $u\in D-N(A')$; $u$ has no neighbors in $A'\cup A$. Take $v\in A$; the pair $u, y$ share no common neighbors, nor do $v, y$. 
Consequently, we can bound the size of $H$:
\begin{align*} 
|V(H)| &\ge 3+d(y) + d(u, H-C) + d(v, H-C) \ge 3+\delta(H) + 2\delta^* \\
&\ge 3+\alpha\ell-s_i + (2\alpha - 2)\ell -2c= 3+(3\alpha - 2)\ell-s_i-2c\\
&\ge 3+(3\alpha - 2)\ell-s_i-2\cdot (\ell-s_i)/2=3+(3\alpha-3)\ell.
\end{align*}
Since $|V(H)|\le \lceil 2\alpha\ell\rceil\le 2\alpha\ell + 1$, this forces $2\alpha\ell + 1 \ge 3+(3\alpha-3)\ell$, yielding $(\alpha-3)\ell\le -2$. This blatantly contradicts $\alpha \ge 3$. 
\end{proof}

Therefore, $|V(C)| \ge \min_{z\in S} d_H(z)-4$. Then we have:
\[ |V(H-C)| = |V(H)| - |V(C)| \le \lceil 2\alpha \ell\rceil - \min_{z\in S} d_H(z)+4. \]
Since $|V(H-C)|\ge \delta^*+1\ge (\alpha-1)\ell-c+1\ge 2\ell+3$, we try to apply Theorem~\ref{LRY}. Since $H-C$ is not $\ell$-knitted, we have 
\begin{align*}
0&>2\delta^* - \left(|V(H-C)| + \ell - 2 \right)\\
&\ge 2+2\lfloor\alpha\ell\rfloor-2d(u,C) - \big(\lceil 2\alpha \ell\rceil - \min_{z\in S} d_H(z)+4\big)- \ell + 2 \\
&= \min_{z\in S} d_H(z)-2d(u,C)+(2\lfloor\alpha\ell\rfloor-\lceil 2\alpha \ell\rceil-\ell)\\
&\ge (1+\lfloor\alpha\ell\rfloor-s_i)-2d(u,C)-\ell-(2\lfloor\alpha\ell\rfloor-\lceil 2\alpha \ell\rceil)\\
&\ge (1+\lfloor\alpha\ell\rfloor)-s_i-2(\ell+c)-\ell-(2\lfloor\alpha\ell\rfloor-\lceil 2\alpha \ell\rceil)\\
&\ge (1+\lfloor\alpha\ell\rfloor)-s_i-2\ell-2\cdot (\ell-s_i)/2-\ell-(2\lfloor\alpha\ell\rfloor-\lceil 2\alpha \ell\rceil)\\
&\ge (1+\lfloor\alpha\ell\rfloor)-4\ell-(2\lfloor\alpha\ell\rfloor-\lceil 2\alpha \ell\rceil)\\
&\ge (\alpha-4)\ell.
\end{align*}
where $d(u,H-C)=\delta^*$ for $u\in H-C$, $\min_{z\in S} d(z)=d(z)\ge 1+\lfloor\alpha\ell\rfloor-|S_i|$ for $z\in S_i$. We used that $d(u,C)=\sum_j d(u,C_j)\le \ell+c$, and $c\le (\ell-|S_i|)/2$.
A contradiction to $\alpha\ge 4$.
\end{proof}

Combining Theorem~\ref{mass-min} and Theorem~\ref{subgraph}, we deduce the following result, which encompasses Theorem~\ref{knitted-theorem}.

\begin{theorem}\label{dense-graphs}
Let $\ell$ be an integer. Let $G$ be a graph and $S\subseteq V(G)$ be a subset of size $\ell$. If $(G,S)$ is $(4, \ell)$-massed, then $(G,S)$ is knitted. 
\end{theorem}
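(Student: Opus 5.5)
We argue by contradiction via a minimal counterexample, combining Theorem~\ref{mass-min} with Theorem~\ref{subgraph} for $\alpha = 4$. Suppose some $(4,\ell)$-massed pair $(G,S)$ with $|S| = \ell$ is not knitted. Then there is a partition $S = S_1 \cup \cdots \cup S_t$ for which $(G,S)$ is not $(s_1,\ldots,s_t)$-knitted. Among all pairs that are $4\ell$-massed, have $|S| \le \ell$, and fail to be $(s_1,\ldots,s_t)$-knitted, we choose one that is $(4,\ell)$-minimal in the sense of Definition~\ref{minimal}. This is possible because the ordering in Definition~\ref{minimal} is well-founded: minimize $|V(G)|$, then $\rho(V(G)-S)$, then maximize the number of edges inside $S$, which is bounded. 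Call the chosen pair $(G_0,S_0)$. Since $G_0$ may have had edges added inside $S_0$, a knit of $(G_0,S_0)$ would not automatically transfer back, so we work directly with $(G_0,S_0)$ and derive a contradiction there.

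Applying Theorem~\ref{mass-min} with $\alpha = 4 \ge 2$ gives two things. First, $(G_0,S_0)$ has no rigid separation of order at most $|S_0|$. Second, there is a vertex $v$ such that $H = G_0[N[v]]$ satisfies $|V(H)| \le 8\ell$, every vertex of $V(H) - S_0$ has $d_H \ge 1 + 4\ell$, and every vertex of $S_0 \cap S_i$ has $d_H \ge 1 + 4\ell - s_i$. Set $T = S_0 \cap V(H)$. If $|T| < \ell$, we pad $T$ with vertices of $H - S_0$ to obtain a set $S'$ of size exactly $\ell$. The padded vertices have degree at least $1 + 4\ell$, so the hypothesis of Theorem~\ref{subgraph} is met for any partition of $S'$. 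This step needs $\ell \ge 5$; the finitely many small values of $\ell$ would be handled separately, for instance by noting that a $4\ell$-massed pair has very high minimum degree off $S$, or by absorbing them into the constant. Theorem~\ref{subgraph} with $\alpha = 4$ then yields a subgraph $K \subseteq H$ that is knitted for the relevant partition.

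The final step turns the knitted dense subgraph $K$ into a rigid separation, contradicting the first conclusion of Theorem~\ref{mass-min}. If there are $|S_0|$ vertex-disjoint paths in $G_0$ from $S_0$ to $V(K)$, we route each $s \in S_0$ to a distinct vertex of $K$. We then knit the endpoints inside $K$ using the partition induced on them, after choosing the paths to avoid $K$'s interior except at their ends (take a minimal linkage). This knits $(G_0,S_0)$, a contradiction. Otherwise, Menger's theorem gives a separation $(A,B)$ of $(G_0,S_0)$ of order less than $|S_0|$ with $V(K) \subseteq B$. Choose one of minimum order; then $A \cap B$ links into $K$ by disjoint paths inside $G_0[B]$, so $(G_0[B], A \cap B)$ is knitted. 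Hence $(A,B)$ is a rigid separation of order at most $|S_0|$, contradicting Theorem~\ref{mass-min}.

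I expect this last step to be the main obstacle. The difficulty is that Theorem~\ref{subgraph} only supplies a subgraph knitted for one prescribed partition on a prescribed set, whereas the linkage argument needs knittedness with respect to whatever endpoints the Menger paths land on. My first attempt would be to use the freedom in Theorem~\ref{subgraph}, which holds for every $S \supseteq T$ of size $\ell$ and every partition, to re-choose the terminal set after the paths are fixed. I would also use the intermediate-vertex trick so the paths meet $K$ only at their endpoints.
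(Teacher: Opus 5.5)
Your route is the paper's route: a $(4,\ell)$-minimal counterexample, Theorem~\ref{mass-min} for the absence of rigid separations and for $H=G_0[N[v]]$, Theorem~\ref{subgraph} for knitting inside $H$, and then the Menger dichotomy. Running the Menger step with $|S_0|$ rather than $\ell$ is more careful than the paper, since a minimal pair may have $|S_0|<\ell$.

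The obstacle you flag is real, and your proposed fix only half works. It does work in the linkage case, and can be made precise as follows. The proof of Theorem~\ref{subgraph} shows that $H$ itself knits every $S'\supseteq T$ of size $\ell$ whose partition satisfies the degree hypotheses. So take $K=H$ and choose $S'$ only after the paths are fixed.
\begin{itemize}
\item Each vertex of $T=S_0\cap V(H)$ is its own trivial $S_0$--$V(H)$ path, so the terminal set contains $T$.
\item The transported partition has part sizes $s_1,\dots,s_t$, and each vertex of $T\cap S_i$ still lies in a part of size $s_i$.
\item Padding with singletons from $V(H)-S_0$, all of degree at least $1+4\ell$, brings the terminal set to size $\ell$.
\end{itemize}
The hypotheses of Theorem~\ref{subgraph} then hold verbatim, so $(G_0,S_0)$ is knitted for $S_1,\dots,S_t$, a contradiction.

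What fails is your claim that Theorem~\ref{subgraph} holds for ``every partition.'' A vertex of $T$ lying in a part of size $p$ must have degree $1+4\ell-p$, so the hypothesis depends on the partition. This breaks the rigid-separation case. There $T\subseteq A\cap B$, because $S_0\subseteq A$ and $V(H)\subseteq B$. Rigidity requires knitting $A\cap B$ in $G_0[B]$ for \emph{arbitrary} partitions, so a vertex of $T\cap S_i$ may be placed in a part smaller than $s_i$. Theorem~\ref{mass-min} only guarantees it degree $1+4\ell-s_i$. The closing estimate in the proof of Theorem~\ref{subgraph} is exactly tight at $\alpha=4$, so this deficit cannot be absorbed.

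The paper's proof covers this step only with ``the identical routing logic.'' You therefore inherit the gap rather than create it, but as written neither argument shows that the minimum-order separation is rigid. Likewise, ``absorbing small $\ell$ into the constant'' is not available, because the statement fixes $\alpha=4$. The cases $\ell\le 4$, which Theorem~\ref{LRY} and hence Theorem~\ref{subgraph} exclude, are left unaddressed by the paper too.
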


\begin{proof}
Assume, for the sake of contradiction, that there exists a $(4, \ell)$-massed graph that is not $(s_1, \ldots, s_t)$-knitted. We may select $G$ such that $(G,S)$ is $(4,\ell)$-minimal. By Theorems~\ref{mass-min} and \ref{subgraph}, $G$ contains no rigid separation of order at most $\ell$, but it strictly contains an $(s_1, \ldots, s_t)$-knitted subgraph $K$.  Let $S_1, \ldots, S_t$ with $s_i=|S_i|$ be a partition of $S$ that is not knitted. 

By Menger's Theorem, if there are $\ell$ vertex-disjoint paths from $S$ to $K$, we can uniquely associate each vertex in $S$ to a distinct terminal in $K$ (take the first vertices on the paths that are on $K$). Take the corresponding partition of the terminals in $K$ which can be connected via disjoint connected subgraphs inside $K$. Tracing back through the paths resolves the knit in $G$.

If no such $\ell$ disjoint paths exist, there must be a separation $(A,B)$ separating $S$ from $K$ with $S\subseteq A$ and $K\subseteq B$, such that its order $|A\cap B|$ is strictly less than $\ell$. We choose $(A,B)$ to have the minimum possible order. By Menger's Theorem again, there are $|A\cap B|$ disjoint paths from $A\cap B$ to $K$. The identical routing logic demonstrates that for any partition of $A\cap B$, the parts can be connected via $K$. Thus, the induced subgraph $G[B]$ securely knits $A\cap B$. This perfectly satisfies Definition~\ref{rigid-separation}, meaning $(A,B)$ is a rigid separation of order at most $\ell-1$, squarely contradicting the guarantee of Theorem~\ref{mass-min}.
\end{proof}

\section*{Acknowledgement} 
The authors would like to thank Michael Lafferty for the initial discussions that contributed to this revision.

\end{document}